\newcommand\Log{\textrm{Log}}
\newcommand\pa{\partial}
\newcommand\half{\frac{1}{2}}
\newcommand\Rez[1]{\mathrm{Re}( #1 )}
\newcommand\bX{\boldsymbol{X}}
\newcommand\bZ{\mathbb{Z}}
\newcommand\bR{\mathbb{R}}
\newcommand\bx{\boldsymbol{x}}
\newcommand\by{\boldsymbol{y}}
\newcommand\bn{\boldsymbol{n}}
\newcommand\bl{\boldsymbol{\ell}}
\newcommand\bk{\boldsymbol{k}}
\newcommand\bu{\boldsymbol{u}}
\newcommand\bv{\boldsymbol{v}}
\newcommand\bz{\boldsymbol{z}}
\newcommand\bzero{\boldsymbol{0}}
\newcommand\bj{\boldsymbol{j}}
\newcommand\cD{\mathcal{D}}
\newcommand\cS{\mathcal{S}}
\newtheorem{proposition}{Proposition}
\newtheorem{remark}{Remark}
\newtheorem{lemma}{Lemma}
\newtheorem{theorem}{Theorem}
\title{On quadrature for singular integral operators
with complex symmetric quadratic forms}
\author[1]{Jeremy Hoskins}
\author[2]{Manas Rachh}
\author[3]{Bowei Wu}
\affil[1]{Department of Statistics, University of Chicago, Chicago, IL  60637}
\affil[2]{Center for Computational Mathematics, Flatiron Institute, New York, NY, 10010}
\affil[3]{Department of Mathematics and Statistics, University of Massachusetts Lowell, Lowell, MA 01854}
\date{December 11, 2023}
\begin{document}

\maketitle

\begin{abstract}
This paper describes a trapezoidal quadrature method for the discretization of weakly singular, singular and hypersingular boundary integral operators with complex symmetric quadratic forms. Such integral operators naturally arise when complex coordinate methods or complexified contour methods are used for the solution of time-harmonic acoustic and electromagnetic interface problems in three dimensions. The quadrature is an extension of a locally corrected punctured trapezoidal rule in parameter space wherein the correction weights are determined by fitting moments of error in the punctured trapezoidal rule, which is known analytically in terms of the Epstein zeta function. 
In this work, we analyze the analytic continuation of the Epstein zeta function and the generalized Wigner limits to complex quadratic forms; this analysis is essential to apply the fitting procedure for computing the correction weights. We illustrate the high-order convergence of this approach through several numerical examples.
\end{abstract}

\section{Introduction}

In this short paper we consider quadrature formulae for integrals of the form
\begin{align}\label{eqn:int}
I =\int_{D} \frac{g(\bv)}{\left(\bv^T A \bv\right)^s}\,{\rm d}\bv,
\end{align}
where $D$ is either $\mathbb{T} \times \mathbb{R}$ or $\mathbb{R}^2,$ and $A$ is a $2\times 2$ complex symmetric matrix. Additionally, we assume that $s \in \mathbb{C} $ (provided the integral makes sense), $g \in C^\infty(D)$ is a complex compactly-supported smooth function defined on $D.$ In practice, the regularity assumptions on $g$ can be relaxed substantially. We note that when $\Re(s)>1$, the integral is hypersingular.

Integrals of this form arise quite naturally in a number of contexts, particularly in the physical modeling of infinite interfaces, or objects, in acoustical, optical, or fluidic scattering problems. In this setting, $g$ encodes both a smooth component of the kernel and the density of a layer potential for representing the solution, and $A$ encodes the first fundamental form at a point on the surface. For purely real $A$, in \cite{wu2021corrected} and later in \cite{wu2023unified}, it was shown how to obtain suitable quadrature formulae by modifying the standard trapezoidal rule. More concretely, for $D = \mathbb{R}^2,$ if the region is discretized using equispaced points with spacing $h,$ then the classical punctured trapezoidal rule is
\begin{align}\label{eqn:punc_trap}
I \approx T_{h}:={\sum_{\bj\in \mathbb{Z}^2}} '\frac{g(\bj h)}{\left(\bj^{T} A \bj\right)^s} h^{2-2s} \,,
\end{align}
where $\bj = (j_{1},j_{2})$ and the prime in the sum denotes the fact that the $(0,0)$ term is excluded.
The singularity in the denominator means that one would expect it to converge like $O(h^{2-2s}).$ For real symmetric matrices $A$, it has been shown in \cite{wu2021corrected}
that the Epstein zeta function~\cite{epstein1903theorie,epstein1906theorie} denoted by $Z_{A}(s)$ is a {\it diagonal correction}, satisfying
$$I - T_{h} -Z_{A}(s) g(\bzero)h^{2-2s} = O(h^{4-2s})\,.$$

In this short note we extend the results in \cite{wu2021corrected} to the case of complex symmetric quadratic forms. As part of this, we analyze the analytic continuation of the Epstein zeta function to complex symmetric quadratic forms with positive definite real parts. We also extend the Wigner limits of \cite{wu2023unified} for complex quadratic forms. The result is a simple formula for a corrected trapezoidal rule which can be applied to approximate integrals of the form \cref{eqn:int}.

The remainder of the paper is organized as follows. In Section \ref{sec:main}, we state the main analytical results of the paper and present a sketch of their proofs. In Section \ref{sec:lay_pot}, we review definitions of layer potentials of acoustic and electromagnetic scattering, and discuss an application in which evaluation of integrals of the form~\cref{eqn:int} naturally arise. Finally, in Section \ref{sec:numerics} we give details of the numerical implementation of the modified trapezoidal quadrature, and show its application to solving scattering problems involving infinite interfaces, and infinitely long objects.

\section{Main results}\label{sec:main}
In this section we present the main result of this paper (\Cref{thm:diag}) which constructs the diagonal correction in the Trapezoidal rule for evaluating~\cref{eqn:int}. In the following, we suppose that $A$ is a complex symmetric matrix given by
\begin{equation}
A = \begin{bmatrix} E & F\\ F & G\end{bmatrix},
\end{equation}
where $E,F,G\in\mathbb{C}$, and both ${\Re}(A)$ and ${\Re}(A^{-1})$ are positive definite. Let $\| A \|$ denote the spectral norm of $A$. 
Furthermore, let $Q_A$ denote the quadratic form associated with $A,$
\begin{equation}
Q_A(\bv) = Q_{A}(v_{1},v_{2}) := Ev_{1}^2+2Fv_{1}v_{2}+Gv_{2}^2.
\end{equation}
Finally, let $\Gamma(\cdot)$ denote the Gamma function, $\Gamma(\cdot,\cdot)$ denote the incomplete Gamma function, and $Z_{A}(s)$ denote the Epstein zeta function given by
\begin{align}\label{eqn:zeta_def}
    Z_A(s):= {\sum_{\bj\in \mathbb{Z}^{2}}} ' Q_A(\bj)^{-s} \,.
\end{align}
Note that the sum is convergent and analytic in the region $\Re{(s)} > 1$.

We begin by considering the analytic continuation of the Epstein zeta function in the following proposition.
\begin{proposition}
\label{prop:zeta_ext}
    Suppose that $Z_A(s)$ is the Epstein zeta function defined by~\cref{eqn:zeta_def}. Suppose that $\Re(A)$ and $\Re(A^{-1})$ are positive definite. Then $Z_A$ has an analytic extension valid for all $s\in \mathbb{C}\setminus\{1\}.$
\end{proposition}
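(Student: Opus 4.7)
The plan is to follow the classical Riemann--Hecke argument: represent $Z_A(s)$ as a Mellin transform of a theta series and exploit a Jacobi-type functional equation to extract the continuation. For $\Re(s)>1$ the identity $\Gamma(s)a^{-s}=\int_0^\infty t^{s-1}e^{-at}\,{\rm d}t$ is valid for every $a\in\mathbb{C}$ with $\Re(a)>0$. Applied with $a=Q_A(\bj)$, which has positive real part for $\bj\neq\bzero$ because $\Re(A)$ is positive definite, and followed by interchange of sum and integral, this gives
\begin{equation*}
\Gamma(s)\,Z_A(s) \;=\; \int_0^\infty t^{s-1}\bigl(\theta_A(t)-1\bigr)\,{\rm d}t,\qquad \theta_A(t):=\sum_{\bj\in\mathbb{Z}^2}e^{-t\,Q_A(\bj)}.
\end{equation*}

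Next I would establish the Jacobi-type functional equation
\begin{equation*}
\theta_A(t) \;=\; \frac{\pi}{t\sqrt{\det A}}\,\theta_{A^{-1}}\!\bigl(\pi^2/t\bigr),\qquad t>0,
\end{equation*}
where $\sqrt{\det A}$ is a fixed holomorphic branch on the admissible set of matrices. For real positive definite $A$ this is just Poisson summation applied to the Gaussian $\bv\mapsto e^{-tQ_A(\bv)}$. For complex symmetric $A$ satisfying our hypotheses, I would obtain it by analytic continuation in the entries $E,F,G$: both sides are holomorphic on the (connected) set where $\Re(A)$ and $\Re(A^{-1})$ are positive definite, and they coincide on the real locus. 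The positive-definiteness of $\Re(A^{-1})$ is exactly the condition needed for the dual theta series on the right to converge absolutely.

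With these ingredients in hand, I would split the Mellin integral at $t=1$,
\begin{equation*}
\Gamma(s)\,Z_A(s) \;=\; \int_1^\infty t^{s-1}\bigl(\theta_A(t)-1\bigr)\,{\rm d}t \;+\; \int_0^1 t^{s-1}\bigl(\theta_A(t)-1\bigr)\,{\rm d}t.
\end{equation*}
The first integral is entire in $s$ since $\theta_A(t)-1$ decays super-exponentially as $t\to\infty$. For the second, I would substitute the functional equation, then change variables $u=\pi^2/t$ to recast it as an integral over $[\pi^2,\infty)$ of $\theta_{A^{-1}}(u)-1$ against a power of $u$, plus two elementary $t$-integrals producing simple poles at $s=0$ (coming from the $-1$) and at $s=1$ (with residue proportional to $1/\sqrt{\det A}$). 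Dividing through by $\Gamma(s)$ removes the apparent pole at $s=0$, since $1/\Gamma(s)$ vanishes there, and leaves $Z_A(s)$ as a meromorphic function on $\mathbb{C}$ with the sole singularity at $s=1$, as claimed.

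The main obstacle is a rigorous justification of the functional equation in the complex setting. This requires (i) choosing a consistent holomorphic branch of $\sqrt{\det A}$ on the admissible set, which is possible because $\det A \neq 0$ there and the set is simply connected; (ii) obtaining absolute-convergence bounds for both theta series that are uniform on compact subsets of the admissible set, so as to legitimate termwise analytic continuation; and (iii) verifying the Fourier-transform identity for the complex Gaussian $e^{-tQ_A(\bv)}$, either by a direct complex Gaussian integration via contour deformation, or by reducing to the real case and invoking analytic continuation in the matrix entries. Once these steps are handled, the conclusion of the proposition follows by a direct transcription of Epstein's classical derivation.
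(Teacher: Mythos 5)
Your proposal follows essentially the same route as the paper: the Mellin-transform representation of $Q_A(\bj)^{-s}$, the Jacobi/Poisson theta functional equation extended to complex symmetric $A$ (the paper proves it by periodizing the complex Gaussian and computing its Fourier coefficients, which amounts to the same analytic-continuation-in-$A$ argument you sketch), splitting the integral at $t=1$, and dividing by $\Gamma(s)$ to remove the spurious pole at $s=0$. The resulting incomplete-Gamma representation, analytic on $\mathbb{C}\setminus\{1\}$ precisely when $\Re(A)$ and $\Re(A^{-1})$ are positive definite, is exactly the paper's equation \cref{eqn:zeta_exp}, so your plan is correct and matches the paper's proof.
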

\begin{proof}
Since ${\Re}(Q_A(\bj))>0$ for all $\bj \in \mathbb{Z}^2\setminus \{\bzero\}$, we have
\begin{align}\label{eqn:zeta_exp}
\frac{\Gamma(s)}{\pi^s} Z_{A}(s) = \frac{\Gamma(s)}{\pi^s}\,\sum_{\bj \in \mathbb{Z}^{2}}\,' \frac{1}{Q_A(\bj)^s} = &\sum_{\bj \in \mathbb{Z}^{2}}\,'\,\left[\frac{\Gamma(s,\pi Q_A(\bj))}{\left(\pi Q_A(\bj)\right)^s}+\frac{1}{\sqrt{{\rm det}A}}\frac{\Gamma(1-s,\pi Q_{A^{-1}}(\bj))}{\left(\pi Q_{A^{-1}}(\bj)\right)^{1-s}}\right]\\
&+\frac{1}{(s-1)\sqrt{{\rm det}A}} -\frac{1}{s},\nonumber
\end{align}
for all $s,$ with $\Re(s) >1.$ The proof is essentially identical to that of the real case in \cite[eq.(1.2.11)]{borwein2013lattice}. For completeness, a proof of \cref{eqn:zeta_exp} is given in~\labelcref{app:split}. Clearly, the left-hand side is well-defined and analytic provided that $\Re(s) >1$, and $\Re{(A)}$
is positive definite. On the other hand, the right hand side is well-defined and analytic for all $s \in \mathbb{C} \setminus \{1\}$, when $\Re{(A)}$ and $\Re{(A^{-1})}$ are positive definite, and thus serves as an analytic extension of the left hand side over that set.
\end{proof}
\begin{remark}
    In the literature, the analytic continuation of $Z_A(s)$ in $s$ is classical. To the best of our knowledge, the analytic continuation in $A$, though perhaps unsurprising, is new, particularly for non-Hermitian matrices. 
\end{remark}
\begin{remark}
In terms of numerical algorithms, the incomplete Gamma function in \cref{eqn:zeta_exp} can be evaluated at complex arguments efficiently using Legendre's continued fraction expansion~\cite{temme94}. 
\end{remark}
\begin{remark}
One special case of particular importance is when $A$ is a diagonal matrix. In this situation, the restriction on positive definiteness of $\Re{(A)}$, and $\Re{(A^{-1})}$, can be relaxed to 
\begin{equation}
\frac{E}{|E|} + \frac{G}{|G|} \neq 0 \, .
\end{equation}
In such situations it is easy to show that there exists $\xi \in \mathbb{C},$ $|\xi|=1,$ such that $\Re (\xi A)$ and $\Re(\bar{\xi}A^{-1})$ are both positive definite. $Q_A$ is related to $Q_{\xi A}$ by $Q_A(\bj) = Q_{\xi A}(\bj)/\xi.$ Thus,  one can use the above arguments to obtain an expression for $Z_A(s)$ in terms of $Z_{\xi A}(s),$ even though the right-hand side of  \cref{eqn:zeta_exp} is undefined. 
\end{remark}

In the next result, we bound the error incurred when the series appearing on the right-hand side of \cref{eqn:zeta_exp} is truncated.
\begin{proposition}
Suppose that $\Re(A)$ and $\Re(A^{-1})$ are both positive definite.  Given $\epsilon >0,$ suppose that $\rho$ satisfies
$$ \pi\rho^2 > \max \{ \|A\| , s \|(\Re(A))^{-1}\| , \|A^{-1}\| , (1-s) \|(\Re(A^{-1}))^{-1}\|\}$$
and
\begin{equation}
\label{eq:rho_res}
\rho \ge \max\left\{\frac{1}{\|(\Re (A))^{-1}\|^{1/2}}\sqrt{\log \frac{4\pi}{\epsilon\,\|(\Re (A))^{-1}\|}}+1,\frac{1}{\|(\Re (A^{-1}))^{-1}\|^{1/2}}\sqrt{\log \frac{4\pi}{\epsilon\,\|(\Re (A^{-1}))^{-1}\|}}+1 \right\},
\end{equation}
then 
\begin{align}\label{}
\frac{\Gamma(s)}{\pi^s}\,\sum_{\bj \in \mathbb{Z}^{2}}\,' \frac{1}{Q_A(\bj)^s} = &\sum_{\substack{\bj \in \mathbb{Z}^{2} \\j_{1}^2+j_{2}^2\le \rho^2}}\,'\,\left[\frac{\Gamma(s,\pi Q_A(\bj))}{\left(\pi Q_A(\bj)\right)^s}+\frac{1}{\sqrt{{\rm det}A}}\frac{\Gamma(1-s,\pi Q_{A^{-1}}(\bj))}{\left(\pi Q_{A^{-1}}(\bj)\right)^{1-s}}\right]\\
&+\frac{1}{(s-1)\sqrt{{\rm det} A}} -\frac{1}{s} + R,\nonumber
\end{align}
where $|R| < \epsilon.$
\end{proposition}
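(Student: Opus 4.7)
My plan is to bound the tail
\[
R = \sum_{\substack{\bj\in\bZ^{2}\\ j_1^2+j_2^2>\rho^2}}\left[\frac{\Gamma(s,\pi Q_A(\bj))}{(\pi Q_A(\bj))^s}+\frac{1}{\sqrt{\det A}}\frac{\Gamma(1-s,\pi Q_{A^{-1}}(\bj))}{(\pi Q_{A^{-1}}(\bj))^{1-s}}\right]
\]
(obtained by subtracting the truncated sum from the identity of \Cref{prop:zeta_ext}) by majorizing each summand with a Gaussian in $|\bj|$ and then comparing the resulting lattice sum to a two-dimensional integral.

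First, positive definiteness gives $\Re Q_A(\bj)=\bj^{T}\Re(A)\bj\ge |\bj|^{2}/\|(\Re(A))^{-1}\|$, and the analogous lower bound for $\Re Q_{A^{-1}}(\bj)$. The hypothesis $\pi\rho^{2}>\max\{\|A\|,\|A^{-1}\|\}$ further ensures that, on the tail $|\bj|>\rho$, neither quadratic form comes close to zero, so the summands are well defined. Second, to control the incomplete Gamma factors I would use the identity (obtained by the substitution $t=zu$ and valid for $\Re z>0$)
\[
\frac{\Gamma(s,z)}{z^{s}}= \int_{1}^{\infty} u^{s-1}e^{-zu}\,du = e^{-z}\int_{0}^{\infty}(1+v)^{s-1} e^{-zv}\,dv,
\]
from which $|\Gamma(s,z)/z^{s}|\le e^{-\Re z}\int_{0}^{\infty}(1+v)^{\Re s-1}e^{-(\Re z)v}\,dv$. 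The remaining $v$-integral is bounded by $1/\Re z$ when $\Re s\le 1$, and by $1/(\Re z-\Re s+1)$ (using $(1+v)^{\Re s-1}\le e^{(\Re s-1)v}$) when $\Re s>1$. The hypotheses $\pi\rho^{2}>s\|(\Re(A))^{-1}\|$ and $\pi\rho^{2}>(1-s)\|(\Re(A^{-1}))^{-1}\|$ are exactly what is needed to dominate these polynomial prefactors by a universal constant, yielding clean exponential bounds
\[
\left|\frac{\Gamma(s,\pi Q_A(\bj))}{(\pi Q_A(\bj))^{s}}\right|\lesssim e^{-\pi|\bj|^{2}/\|(\Re(A))^{-1}\|},
\]
and the analogous estimate for the $A^{-1}$ summand with $s$ replaced by $1-s$.

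Third, I would compare the Gaussian lattice tail to a planar integral: enclosing each lattice point in a disk containing its Voronoi cell,
\[
\sum_{|\bj|>\rho}e^{-\alpha|\bj|^{2}}\le \int_{|\bv|>\rho-1}e^{-\alpha|\bv|^{2}}\,d\bv = \frac{\pi}{\alpha}\,e^{-\alpha(\rho-1)^{2}}.
\]
Applied with $\alpha=\pi/\|(\Re(A))^{-1}\|$ and $\alpha=\pi/\|(\Re(A^{-1}))^{-1}\|$, the logarithmic condition \cref{eq:rho_res} is precisely what makes each piece of $R$ smaller than $\epsilon/2$, so that $|R|<\epsilon$.

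The hardest part will be the second step: producing an estimate on $|\Gamma(s,z)/z^{s}|$ that is sharp in $\Re z$ and simultaneously clean in its dependence on $s$, with all polynomial prefactors in $s$ and in $|\bj|$ tracked carefully enough to be absorbed into the explicit form of \cref{eq:rho_res} (including the $\rho-1$ shift coming from the Voronoi-cell inclusion). The first and third steps are then routine norm manipulation and lattice-to-integral comparison.
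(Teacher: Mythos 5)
Your plan follows essentially the same route as the paper's proof: truncate the series from Proposition~\ref{prop:zeta_ext}, bound each tail summand $\Gamma(s,\pi Q_A(\bj))/(\pi Q_A(\bj))^s$ (and its $A^{-1}$ counterpart) by a Gaussian in $|\bj|$ using $\Re(Q_A(\bj))\ge |\bj|^2/\|(\Re(A))^{-1}\|$ with the hypotheses on $\pi\rho^2$ absorbing the $s$-dependent prefactors, and then compare the lattice tail to a planar Gaussian integral over $|\bv|>\rho-1$ to land on the $\epsilon/2$ bounds encoded in \cref{eq:rho_res}. The only real difference is cosmetic: you derive the incomplete-Gamma estimate self-containedly from the integral representation $\Gamma(s,z)/z^s=e^{-z}\int_0^\infty(1+v)^{s-1}e^{-zv}\,dv$, whereas the paper quotes Olver's asymptotic inequality $|\Gamma(s,z)-z^{s-1}e^{-z}|\le e^{-\Re(z)}|z^{s-2}|/(\Re(z-s)+1)$; both yield the same exponential majorant.
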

\begin{proof}
    We begin by recalling the following asymptotic bound on the incomplete Gamma function, 
    \begin{align*}
        \left|\Gamma(s,z) - z^{s-1} e^{-z} \right| \le \frac{e^{-\Re(z)} |z^{s-2}|}{\Re (z -s) +1},
    \end{align*}
    with $s,z \in \mathbb{C}$, $\Re(z)>\Re(s)-1$, and $|{\rm arg}(z)|\le \pi/2 - \delta$ for some $\delta>0$~\cite{olver}.
Suppose that the summation in the first term on the right hand side of \cref{eqn:zeta_exp} is truncated at $j_{1}^2+j_{2}^2 \le \rho^2$, and let $R_{1}$ denote the remainder given by
    $$R_1 := \sum_{\substack{\bj \in \mathbb{Z}^{2} \\j_{1}^2+j_{2}^2>\rho^2}} \frac{\Gamma(s,\pi Q_A(\bj))}{(\pi Q_A(\bj))^s} \, .$$
Substituting the upper bound of the incomplete Gamma function into the above expression, we find that
    $$|R_1| \le \sum_{\substack{\bj \in \mathbb{Z}^{2} \\j_{1}^2+j_{2}^2>\rho^2}} \frac{e^{-\pi \Re(Q_A(\bj))}}{\pi |Q_A(\bj)|}\left[1+ \frac{1}{\pi |Q_A(\bj)|\,(\Re(\pi Q_A(\bj)-s)+1)} \right]. $$
    It follows immediately that for all $\pi \rho^2 \ge \max \{\|A^{-1}\|, s \|(\Re (A))^{-1}\| ,1 \},$
    \begin{equation}
    \nonumber
    \begin{aligned}
    |R_1| &\le 2 \sum_{\substack{\bj \in \bZ^{2} \\j_{1}^2+j_{2}^2> \rho^2}} e^{-\|(\Re (A))^{-1}\| (j_{1}^2+j_{2}^2)}  \\
    &\le 4\pi \int_{\rho-1}^\infty e^{-\|(\Re (A))^{-1}\| r^2}\,r \,{\rm d}r = \frac{2\pi}{\|(\Re (A))^{-1}\|} e^{-\|(\Re (A))^{-1}\| (\rho-1)^2} \leq \frac{\epsilon}{2} \, ,
    \end{aligned}
    \end{equation}
    where the last inequality follows from~\cref{eq:rho_res}.
    Using similar reasoning, we see that for the second series, if
        $$R_2 := \sum_{\substack{\bj \in \bZ^2 \\j_{1}^2+j_{2}^2 >\rho^2}} \frac{\Gamma(1-s,\pi Q_{A^{-1}}(\bj))}{(\pi Q_{A^{-1}}(\bj))^{1-s}}$$
        then 
        $$|R_2| \le \frac{ 2\pi}{\|(\Re A^{-1})^{-1}\|}\, e^{-\|(\Re (A^{-1}))^{-1}\| (\rho-1)^2} \leq \frac{\epsilon}{2} \, .$$
\end{proof}

Suppose that $J_{N}$ is the tensor
product set
\begin{equation}
\label{eq:tens-set}
J_{N} = \{-N, -N+1, \ldots -1,0,1,\ldots N-1,N \}^2 \, .
\end{equation} The following proposition establishes the analyticity of the difference between certain infinite sums and integrals over the plane, which are called the {\it Wigner limits} \cite{borwein2014lattice}. Its proof is a slight generalization of \cite{borwein89} which we present in~\ref{app:bor}.
\begin{proposition}
\label{prop:borwein}
Suppose $Z_A^{(N)},I_A^{(N)},$ and $W_A^{(N)}$ are given by
\begin{equation}
\label{eq:wzi-def}
\begin{aligned}
Z_A^{(N)}(s) &:= {\sum_{\bj \in J_{N}}}'Q_{A}(\bj)^{-s},\\
I_A^{(N)}(s) &:= \int_{\left[-N-\frac{1}{2}, N + \frac{1}{2}\ \right]^2} Q_{A}(\bv)^{-s}\,d\bv, \\
W_A^{(N)}(s) &:= Z_A^{(N)}(s) - I_A^{(N)}(s).
\end{aligned}
\end{equation}
Then $W_A(s):=\lim_{N\to\infty}W_A^{(N)}(s)$ is called the Wigner limit, which exists for $s \in \{0<\Re(s)< 1 \}$ and can be analytically continued to $\{\Re(s)>0\, \cap\, s\neq 1 \}$. In particular, the Wigner limit coincides with the Epstein zeta function \cref{eqn:zeta_def} when $\Re(s)>1$, that is, $W_A(s)=Z_A(s)$.
\end{proposition}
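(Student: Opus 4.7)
The plan is to mirror the argument of Borwein~\cite{borwein89} for real quadratic forms, modified to control the complex quadratic form $Q_A$ through the uniform lower bound $|Q_A(\bv)| \ge \lambda_{\min}(\Re(A))\,\|\bv\|^2$, which follows from the positive definiteness of $\Re(A)$ and will replace every place where the original argument exploited that $Q_A(\bv)$ is real and comparable to $\|\bv\|^2$.

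First, I would rewrite $W_A^{(N)}(s)$ as a telescoping lattice sum by partitioning the integration region $\bigl[-N-\tfrac12,\,N+\tfrac12\bigr]^2$ into unit cells $S_\bj := \bj + [-\tfrac12,\tfrac12]^2$ centered at each $\bj \in J_N$, so that
\begin{equation*}
W_A^{(N)}(s) \;=\; -\int_{S_{\bzero}} Q_A(\bv)^{-s}\,d\bv \;+\; \sum_{\bj \in J_N \setminus \{\bzero\}} \Bigl( Q_A(\bj)^{-s} - \int_{S_\bj} Q_A(\bv)^{-s}\,d\bv \Bigr).
\end{equation*}
Existence of the limit on $\{0<\Re(s)<1\}$ then reduces to absolute convergence of the two pieces. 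The central integral is finite because $|Q_A(\bv)^{-s}| \lesssim \|\bv\|^{-2\Re(s)}$ is locally integrable in $\bR^2$ whenever $\Re(s)<1$. For each remote cell I would Taylor-expand $f(\bv) := Q_A(\bv)^{-s}$ about $\bj$: the constant term cancels $Q_A(\bj)^{-s}$, the linear term integrates to zero by the symmetry of $S_\bj$ about $\bj$, and direct differentiation gives $\|\nabla^2 f(\bv)\| = O(\|\bv\|^{-2\Re(s)-2})$. Summing $\|\bj\|^{-2\Re(s)-2}$ over $\bZ^2\setminus\{\bzero\}$ converges for $\Re(s)>0$, and the bounds are uniform on compact subsets, so $W_A(s)$ is analytic on $\{0<\Re(s)<1\}$.

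For the analytic continuation to $\{\Re(s)>0\}\setminus\{1\}$ and the identification with $Z_A(s)$ on $\Re(s)>1$, I would invoke \Cref{prop:zeta_ext}, which already supplies a meromorphic extension of $Z_A$ to $\mathbb{C}\setminus\{1\}$. By analytic continuation it then suffices to prove $W_A(s) = Z_A(s)$ on any open subset of the strip $\{0<\Re(s)<1\}$, after which the identity propagates to the entire domain of analyticity. To establish this identity I would substitute the incomplete-Gamma representation from the proof of \Cref{prop:zeta_ext} into a closed-form evaluation of $I_A^{(N)}(s)$ for $0<\Re(s)<1$; the $N\to\infty$ limit of $Z_A^{(N)}(s) - I_A^{(N)}(s)$ should then reduce algebraically to the right-hand side of \cref{eqn:zeta_exp}. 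I expect this final algebraic reconciliation to be the main obstacle: the intermediate manipulations (in particular the Poisson-summation-type identity underlying Epstein's functional equation) must remain valid in the non-Hermitian regime, which is precisely what is guaranteed by the positive definiteness of both $\Re(A)$ and $\Re(A^{-1})$ granted in the hypothesis.
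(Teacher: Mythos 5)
Your first half is essentially the paper's argument. The decomposition of $W_A^{(N)}$ into unit cells $S_\bj$, the cancellation of the constant and linear Taylor terms by the symmetry of each cell, the Hessian bound $O(\|\bj\|^{-2\Re(s)-2})$ obtained from $|Q_A(\bv)|\ge \bv^T\Re(A)\bv \gtrsim \|\bv\|^2$, and the resulting absolute convergence for $\Re(s)>0$ (with the central cell handled separately, forcing $\Re(s)<1$) all match the paper, which merely organizes the same sum by shells $|\bj|_\infty=N$ and invokes the Weierstrass $M$-test for analyticity. So the existence and analyticity of $W_A$ on the strip $\{0<\Re(s)<1\}$ is fine.

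The genuine gap is the second half: the identification $W_A=Z_A$ and the continuation across $\Re(s)=1$. Note first that $W_A^{(N)}(s)$ is not even defined for $\Re(s)\ge 1$, since the integral over the cell containing the origin diverges; so the claim for $\Re(s)>1$ can only be about analytic continuations, and your strip-based argument must actually be carried out, not just sketched. Your proposed route --- a ``closed-form evaluation of $I_A^{(N)}(s)$'' over the squares $[-N-\tfrac12,N+\tfrac12]^2$ followed by algebraic reconciliation with the incomplete-Gamma expansion of \Cref{prop:zeta_ext} --- is exactly the step you flag as the main obstacle, and it is not clear it can be pushed through: there is no usable closed form for $I_A^{(N)}$ over a square, and interchanging the $N\to\infty$ limit with the Mellin/Poisson manipulations on the strip (where both $Z_A^{(N)}$ and $I_A^{(N)}$ diverge individually) requires care you have not supplied. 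The paper avoids all of this with a different device: the divergence identity
\begin{equation*}
Q_{A}(\bx)^{-s} = \frac{1}{2(1-s)}\left( \partial_{x_{1}}\bigl(x_{1} Q_{A}(\bx)^{-s}\bigr)  + \partial_{x_{2}}\bigl(x_{2} Q_{A}(\bx)^{-s}\bigr) \right)
\end{equation*}
together with Green's theorem on the level set $\{|Q_A(\bx)|=1\}$ converts the inner integral $I_1(s)$ (convergent for $\Re(s)<1$) and the outer integral $I_2(s)$ (convergent for $\Re(s)>1$) into one and the same boundary integral up to sign, each analytic for $s\neq 1$ with $I_1=-I_2$. Adding $I_1$ to $W_A$ produces a function analytic on all of $\{\Re(s)>0\}\setminus\{1\}$, and evaluating it for $\Re(s)>1$ via $\lim_N(I_A^{(N)}-I_1)=I_2$ gives $W_A+I_1=Z_A+I_1$ there, which is what ties the strip to the half-plane. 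You need either this identity (or an equivalent bridging mechanism) to complete the proof; as written, your argument establishes existence and analyticity of $W_A$ on the strip but not the stated relation to $Z_A$.
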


We now turn to the main result of this paper. For concreteness, we restrict our attention to the case of $D = \mathbb{R}^{2}$, the extension to $D = \mathbb{T} \times \mathbb{R}$ is straightforward. 

\begin{theorem}
\label{thm:diag}
Let $g: \mathbb{R}^2 \to \mathbb{C}$ be a smooth compactly supported function and $s\in \mathbb{R},$ $s<1.$ If $I$ denotes the integral defined in \cref{eqn:int} and $T_h$ denotes the punctured trapezoidal approximation defined in \cref{eqn:punc_trap} then
\begin{equation}
|I - T_{h} - Z_{A}(s) g(\bzero) h^{2-2s}| = O(h^{4-2s}) \,.
\label{eq:lap_diag_corr}
\end{equation}
\end{theorem}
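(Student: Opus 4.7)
The idea is to Taylor expand $g$ about $\bzero$, use the evenness $Q_A(-\bv)=Q_A(\bv)$ to discard odd contributions, and match the constant Taylor term against the $Z_A(s)g(\bzero)h^{2-2s}$ correction using Proposition~\ref{prop:borwein}.

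First, fix a smooth, radial (hence even), compactly supported cutoff $\chi$ with $\chi\equiv 1$ on $\mathrm{supp}(g)$, and decompose
\[
g(\bv) = g(\bzero)\chi(\bv)+[\nabla g(\bzero)\cdot\bv]\chi(\bv)+\tfrac{1}{2}[\bv^T\nabla^2 g(\bzero)\bv]\chi(\bv)+E(\bv),
\]
where $E$ is smooth, compactly supported, and $E(\bv)=O(|\bv|^3)$ near $\bzero$. The linear piece has odd integrand and summand, so it contributes zero to both $I$ and $T_h$. The analysis then reduces to the constant, quadratic, and remainder pieces.

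For the constant piece, choose $N=N(h)$ such that $\chi(h\,\cdot)$ is supported inside $J_N$, and split
\[
I[g(\bzero)\chi]-T_h[g(\bzero)\chi] = h^{2-2s}g(\bzero)\bigl[Z_A^{(N)}(s)-I_A^{(N)}(s)\bigr] + h^{2-2s}g(\bzero)\mathcal{R}_h,
\]
where $\mathcal{R}_h$ is the sum-minus-integral of $(\chi(h\bu)-1)/Q_A(\bu)^s$ on the transition annulus. Since this integrand is $C^\infty$ (the singularity at $\bzero$ lies inside the region $\chi\equiv 1$), $\mathcal{R}_h=O(h^M)$ for every $M$ by an Euler--Maclaurin/Poisson-summation argument. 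Proposition~\ref{prop:borwein} then sends the Wigner partial sum $Z_A^{(N)}(s)-I_A^{(N)}(s)$ to $Z_A(s)$, delivering the prescribed leading correction. The quadratic piece is analyzed analogously: its rescaled integrand $(\bu^T\nabla^2 g(\bzero)\bu)/Q_A(\bu)^s$ is homogeneous of degree $2-2s$, and a parallel Wigner-type bound shows that its sum/integral discrepancy is $O(1)$, giving an overall contribution of $O(h^{4-2s})$.

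The main technical obstacle is the remainder $E$: although $E$ vanishes to order three at $\bzero$, the integrand $E/Q_A^s$ is only H\"older continuous there. The plan is to peel off two further Taylor terms from $E$ --- its cubic coefficient cancels by parity, its quartic piece contributes at order $h^{6-2s}<h^{4-2s}$, and the residual is smooth enough that the classical Euler--Maclaurin estimate for the punctured trapezoidal rule applied to a singular integrand yields $O(h^{4-2s})$. Summing the three controlled contributions establishes~\eqref{eq:lap_diag_corr}.
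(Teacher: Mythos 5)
Your overall architecture matches the paper's: Taylor expand $g$ at the origin, kill the linear term by parity, identify the constant term's contribution with $Z_A(s)$ via Proposition~\ref{prop:borwein}, and show the remainder is regular enough that the punctured trapezoidal rule converges at rate $O(h^{4-2s})$. The treatment of the quadratic and cubic Taylor pieces is fine (the paper simply absorbs everything beyond the linear term into an $O(|\bv|^{2-2s})$ integrand, for which the trapezoidal error is $O(h^{4-2s})$; your extra peeling is harmless). The gap is in the constant piece, and it is genuine.

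First, the claim that $\mathcal{R}_h = O(h^M)$ for every $M$ is false. After rescaling, the function $(\chi(h\bu)-1)Q_A(\bu)^{-s}$ vanishes near the origin but equals $-Q_A(\bu)^{-s}$ wherever $\chi(h\bu)=0$; since $Q_A(\bu)^{-s}$ is not integrable (nor summable) at infinity for $s\le 1$, you must truncate at the box $[-N-\tfrac12,N+\tfrac12]^2$, and the integrand does not vanish at that boundary. The sum-minus-integral over a sharply truncated box of a function that is nonzero at the boundary is governed by Euler--Maclaurin boundary terms, not by spectral accuracy: writing it cell by cell one gets $-\tfrac{1}{24}\sum_{\bj}\Delta\bigl[(\chi(h\cdot)-1)Q_A^{-s}\bigr](\bj)+\cdots \sim \oint_{\partial B_N}\partial_n Q_A^{-s}\sim N^{-2s}$, i.e.\ $\mathcal{R}_h=\Theta(h^{2s})$, not $O(h^M)$. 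Second, even if $\mathcal{R}_h$ were negligible, the rate at which $Z_A^{(N)}(s)-I_A^{(N)}(s)\to Z_A(s)$ matters: from the shell estimate $|\delta_N|\lesssim N^{-2s-1}$ in~\ref{app:bor}, the tail of the Wigner sum is only $O(N^{-2s})=O(h^{2s})$. Each of your two pieces therefore contributes $h^{2-2s}\cdot O(h^{2s})=O(h^{2})$, which is \emph{not} $O(h^{4-2s})$ for $s<1$ (e.g.\ for $s=\tfrac12$ you would get $O(h^2)$ instead of the claimed $O(h^3)$). The leading $\Theta(h^{2s})$ parts of the Wigner tail and of $\mathcal{R}_h$ in fact cancel, so bounding them separately cannot work. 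The paper avoids this by never splitting: it sets $f(h)=\int \eta(h\bu)Q_A(\bu)^{-s}\,{\rm d}\bu-\sum'\eta(h\bj)Q_A(\bj)^{-s}$, argues that $f$ is a smooth \emph{even} function of $h$ with $f(0)=-Z_A(s)$ (this is where Proposition~\ref{prop:borwein} enters), and concludes $f(h)=f(0)+O(h^2)$ from $f'(0)=0$. To repair your argument you would need to exhibit this cancellation explicitly, or adopt the evenness-in-$h$ argument for the combined quantity.
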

\begin{proof}
    We split the contributions of the integral and sum into two pieces: a global piece which vanishes near the singularity; and a local piece which contains the singularity but is compactly supported. To that end, we let $\eta(\bv)$ be a radially symmetric, non-negative, compactly supported, $C^{\infty}$ function on $\bR^2$ with support in an interval $(-a,a)^2$, and $\eta(\bv) \equiv 1$ on $[-a/2,a/2]^2$ for some $a>0$. 
There exists $c_{1},c_{2}$, such that
\begin{equation}
\label{eq:tayl_g}
\begin{aligned}
g(\bv) - g(\bzero) - c_{1}v_{1} - c_{2}v_{2} &= O(|\bv|^2) \, ,
\end{aligned}
\end{equation}
holds for $\bv$ in the vicinity of $\bzero$.
We split the integral in \cref{eqn:int} into two parts
\begin{equation}
I = \int_{\bR^2} \frac{g(\bv) \eta(\bv)}{Q_A(\bv)^s} {\rm d}\bv + \int_{\bR^2} \frac{g(\bv)(1- \eta(\bv))}{Q_A(\bv)^{s}} {\rm d}\bv \, .
\end{equation}
and observe that
\begin{equation}
\label{eq:non-sing_g}
\left|\int_{\mathbb{R}^2} \frac{g(\bv)(1- \eta(\bv))}{Q_A(\bv)^s}\,{\rm d} \bv - {\sum_{\bj \in \bZ^2}}' \frac{g(\bj h)(1 - \eta(\bj h))}{Q_A(\bj h)^s} \right| = O(h^{M}) \, ,
\end{equation}
for all $M>0$. This follows from the compact support of $g$, and that $(1-\eta(\bv))$ vanishing to all orders at the origin. Hence, the integrand is a smooth and compactly supported function on all of $\bR^2$, for which the trapezoidal rule is spectrally accurate. Note that the term corresponding to $\bj = \bzero$ can be omitted in the trapezoidal approximation since $(1-\eta(\bv)) \equiv 0$ in the vicinity of $\bv = \bzero$.

We now turn our attention to the singular term in the vicinity of the origin
\begin{equation}
I_s:=\int_{\bR^2} \frac{g(\bv) \eta(\bv)}{Q_A(\bv)^{s}}{\rm d}\bv \, .\end{equation}
Clearly, it suffices to show that
\begin{equation}
I_s =  {\sum_{\bj \in \bZ^2}}' \frac{g(\bj h) \eta (\bj h))}{Q_{A}(\bj h)^{s}}h^2 - Z_{A}(s) g(\bzero) h^{2-2s} + O(h^{4-2s}).
\label{eq:I2_g}
\end{equation}

In order to establish~\cref{eq:I2_g} holds, we split the integrand once more, writing
\begin{equation}
I_s = \underbrace{\int_{\bR^2} \frac{\left(g(\bv)-g(\bzero) - c_1v_1-c_2v_2\right) \eta(\bv)}{Q_{A}(\bv)^{s}}{\rm d}\bv}_{I_{s,1}} + g(\bzero) \underbrace{\int_{\bR^2} \frac{\eta(\bv)}{Q_{A}(\bv)^{s}}{\rm d}\bv}_{I_{s,2}} + \underbrace{\int_{\bR^2} \frac{c_{1}v_{1} + c_{2}v_{2}}{Q_{A}(\bv)^{s}}\eta(\bv){\rm d}\bv}_{I_{s,3}}.
\label{eq:I2_ag}
\end{equation}
Then, using~\cref{eq:tayl_g}, we obtain
\begin{equation}
\frac{\left(g(\bv)-g(\bzero)-c_1v_1-c_2v_2\right) \eta(\bv)}{Q_{A}(\bv)^{s}} = O(|\bv|^{2-2s}) \, .
\label{eq:I2_b}
\end{equation}
The $O(|\bv|^{2-2s})$ remainder implies that the error of the trapezoidal rule when applied to $I_{s,1}$ is $O(h^{4-2s})$. 

For the integral $I_{s,2}$, let $T_h^{s,2}$ denote its punctured trapezoidal rule approximation. Then by a change of variable $\bv=\bu h$ in $I_{s,2}$,  we have 
\begin{equation}
    I_{s,2}-T_h^{s,2}=h^{2-2s}\underbrace{\left(\int_{\bR^2} \frac{\eta(\bu h)}{Q_{A}(\bu)^{s}}{\rm d}\bu - {\sum_{\bj \in \bZ^2}}' \frac{\eta (\bj h)}{Q_{A}(\bj)^{s}}\right)}_{f(h)}
\label{eq:I2_c}
\end{equation}
Note that the expression $f(h)$ above defines a $C^\infty$ function of $h$. We will show that
\begin{equation}
    f(h) = - Z_A(s) + O(h^2)~~ {\rm as}~~h\to 0,
\label{eq:I2_d}
\end{equation}
by first showing that $f(0)=-Z_A(s)$ and then $f'(0)=0$. First, to see that $f(0)=-Z_A(s)$, we use the fact that $\eta(\bzero)=1$, then
$$
\begin{aligned}
    f(0) = \lim_{h\to 0}f(h) &= \lim_{h\to 0}\left(\int_{\bR^2} \frac{\eta(\bu h)}{Q_{A}(\bu)^{s}}{\rm d}\bu - {\sum_{\bj \in \bZ^2}}' \frac{\eta (\bj h)}{Q_{A}(\bj)^{s}}\right)\\
    &= \int_{\bR^2} \frac{1}{Q_{A}(\bu)^{s}}{\rm d}\bu - {\sum_{\bj \in \bZ^2}}' \frac{1}{Q_{A}(\bj)^{s}} = -W_A(s) = -Z_A(s),
\end{aligned}
$$
where the last two equalities used Proposition \ref{prop:borwein} and Theorem \ref{thm:wigner} (\ref{app:bor}). Next, to show that $f'(0)=0$, we use the fact that $\eta(\bv)=\eta(-\bv)$ for all $\bv$, hence $f(h) = f(-h)$,
$$
f'(0) = \lim_{h\to0}\frac{f(h)-f(-h)}{2h} = \lim_{h\to0}\frac{0}{2h} = 0.
$$
This finishes the proof of \cref{eq:I2_d}, which when substituted in \cref{eq:I2_c} gives
\begin{equation}
    I_{s,2}-T_h^{s,2} = -Z_A(s)h^{2-2s} + O(h^{4-2s})
\end{equation}
For $I_{s,3}$, the odd symmetry of the integrand implies that both the integral and its trapezoidal rule approximation vanish.
This completes the verification of the estimate~\cref{eq:I2_g}. The result \cref{eq:lap_diag_corr} then follows from~\cref{eq:non-sing_g,eq:I2_g}.
\end{proof}

 \begin{remark}
    Higher-order corrections can be achieved following the same correction schemes in \cite{wu2021corrected,wu2023unified}, the key difference in this paper is that we allow the quadratic form to be complex symmetric.
\end{remark}

\section{Layer potentials and integral equations}\label{sec:lay_pot}
The computation of integrals of the form~\cref{eqn:int} arises naturally in the evaluation of layer potentials for solving integral equations on infinite surfaces when using the complexified contour approach (see~\cite{shidong2024, lu2018, bonnet2022}, for example) which is an extension of {\it perfectly matched layer} methods to boundary integral equations with infinite interfaces. In this section, we briefly review the relevant operators and equations.

 Consider the solution of the Helmholtz Dirichlet problem on a perturbed half space $\Omega$ and boundary $\pa \Omega$, see~\cref{fig:pert-halfspace}. The boundary of the region is of the form $(x_{1},x_{2},0)$ for $|\bx| > L$.  
 \begin{figure}[h]
\centering
\includegraphics[width=0.7\textwidth]{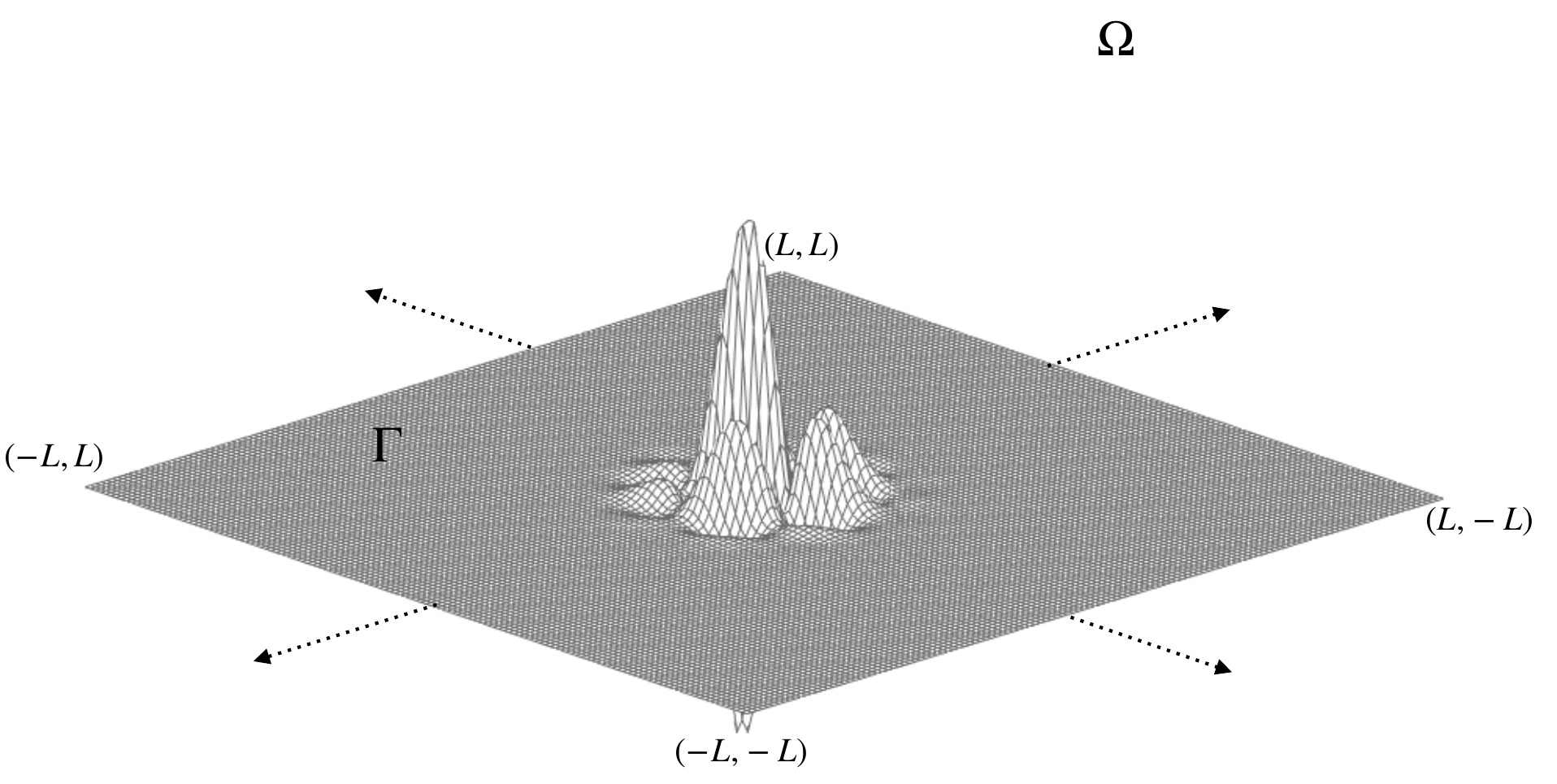}
\caption{Example geometry for Helmholtz Dirichlet scattering problem on a perturbed half-space.}
\label{fig:pert-halfspace}
\end{figure}
The potential $u$ satisfies
\begin{equation}
\begin{aligned}
(\Delta + k^2) u &= 0 \,, \quad \bx \in \Omega \,, \\
u &= f \, ,\quad \bx \in \pa \Omega \, , \\
\lim_{|\bx|\to \infty} |\bx| (\partial_{|\bx|} -ik)u  &= 0 \, , \quad \textrm{uniformly in angle.}
\end{aligned}
\end{equation} 
Here $k$ is the Helmholtz wavenumber, and $f$ is the given Dirichlet data. A standard approach to solve this boundary value problem is to represent the solution $u$ via the Helmholtz double layer potential with a density $\sigma$, denoted by $\cD[\sigma]$, and given by
\begin{equation}
u(\bx) = \cD[\sigma](\bx) = \int_{\pa \Omega} \frac{(\bx - \by) \cdot \bn(\by)}{4\pi |\bx - \by|^3} (1 - ik|\bx-\by|) \exp{(ik|\bx - \by|)} \sigma(\by) \, dS_{\by} \, ,
\end{equation}
where $\bn(\by)$ is the upward facing normal (pointing into the upper half-space) to the surface $\pa \Omega$ at $\by$. This representation satisfies the Helmholtz equation in $\Omega$, and the outgoing boundary condition when the density $\sigma$ is appropriately constrained. On imposing the boundary conditions, and using the standard jump relations for Helmholtz layer potentials~\cite{kress1989, colton2013}, the density $\sigma$ is then obtained by the solution of the following integral equation
\begin{equation}
\frac{1}{2} \sigma(\bx) + \cD^{PV}[\sigma](\bx) = f(\bx) \, , \quad \bx \in \pa \Omega \, ,
\end{equation}
where $\cD^{PV}[\sigma]$ is the restriction of the double layer to the boundary with the integral interpreted in a principal value sense.

One of the difficulties in solving this integral equation for plane wave boundary data, and incoming fields generated from volumetric sources is the slow decay of both the boundary data $f$, and the density $\sigma$. More precisely, for such physical sources, the density and the data satisfy the estimates
\begin{equation}
\sigma(\bx), f(\bx) \lesssim \frac{e^{ik|\bx|}}{|\bx|} \, ,
\end{equation}
as $|\bx| \to \infty$ (for plane wave data, the estimate holds after subtracting the contribution from Snell's law).

A recent method addresses this issue via deforming the integral equation from $(x_{1},x_{2},0) \to (x_{1} + i\psi(x_{1}), x_{2} + i\psi(x_{2}), 0)$ for $|\bx| > L$, and solving the integral equation on the deformed complex surface instead.
Let $\pa \Omega_{C}$ denote the complexified surface, given by the original surface $\pa \Omega$ for $|\bx| < L$, and by 
$(x_{1} + i\psi(x_{1}), x_{2} + i\psi(x_{2}), 0)$ otherwise.
Formally, the integral equation on the complexified interface is given by
\begin{equation}
\label{eq:complex-inteq}
\frac{1}{2}\sigma(\bx) + p.v. \int_{\pa \Omega_{C}}  \frac{(\bx - \by) \cdot \bn(\by)}{4\pi r(\bx-\by) } (1 - ik r(\bx-\by)) \exp{(ik r(\bx-\by))} \sigma(\by) \, dS_{\by} \,  = f(\bx) \,, \quad \bx \in \pa \Omega_{C} \,,
\end{equation} 
where $r(\bx)$ is as defined by
\begin{equation}
\label{eq:rdef}
r(\bx) = \sqrt{x_{1}^2 + x_{2}^2 + x_{3}^2} \,
\end{equation}
which is the analytic extension of $|\bx-\by|$ when $\bx$ or $\by$ have complex coordinates. Suppose that $\psi$ is chosen to be a monotonically increasing function of $x$, say $\psi(x) = \alpha (x-L)$ for $x>L$, and $\alpha(x+L)$ for $x<-L$, for example. In this setting, the kernel of the integral operator on the complexified surface decays exponentially, and the density and the data satisfy the estimates
\begin{equation}
|\sigma(\bx)|, |f(\bx)| \lesssim e^{-(k -\delta) \alpha \sqrt{x_{1}^2 + x_{2}^2}} \, .
\end{equation} 
For a detailed discussion on the use of complex coordinates for the solution of outgoing boundary value problems for Helmholtz equations, see~\cite{shidong2024}.

\begin{remark}
When solving the Dirichlet problem, analogous to compact obstacles, the combined field integral equation is the representation of choice, in which case the potential $u$ is represented as
\begin{equation}
\label{eq:comb-rep}
u(\bx) = \cD[\sigma](\bx) - ik \cS[\sigma](\bx) \, ,
\end{equation}
where $\cS[\sigma]$ is the single layer potential given by
\begin{equation}
\cS[\sigma](\bx) = \int_{\pa \Omega} \frac{\exp{(ik|\bx - \by|)}}{4\pi|\bx-\by|} \sigma(\by) \, dS_{\by} \, .
\end{equation}
The quadrature correction above can be easily adapted to this case.
\end{remark}

\begin{remark}
For related boundary value problems, such as interfaces with transmission boundary conditions, the integral operators involve restrictions of the Neumann data of the single and double layer potentials as well. The use of complex coordinates, and the quadratures presented in this work, for efficient discretization of the integral operators apply to this setting as well.
\end{remark}

\begin{remark}
For simplicity, the discussion in this section complexifies the surfaces in the flat infinite part of the interface geometry. In practice, the coordinates can be complexified using a smooth function and in the non-flat regions as well, as long as the parameterization is analytically available.
\end{remark}

We now show how the above quadrature corrections can be used to compute the action of these integral operators on smooth densities. Suppose that $\boldsymbol{X}: \mathbb{R}^{2} \to \pa \Omega_{C} \subset \mathbb{C}^{3}$ is a smooth parameterization of a complexified surface $\pa \Omega_{C}$. For $\bx = \bX(\bv') \in \pa \Omega_{C}$, we first consider the evaluation of integrals of the following form
\begin{equation}
\label{eq:int-g-main_old}
\int_{\pa \Omega_{C}} \frac{f(\by)}{r(\bx-\by)^{2s}} \, dS 
\end{equation}
where $f$ is any function with $f \circ S$ real analytic in a neighborhood of $\bv'$ and decaying sufficiently rapidly at infinity so that the integrand in the previous expression is absolutely integrable. The function $r(\bx)$ is the {\it complexified} distance function, as defined in~\cref{eq:rdef}.

Next, we define the vector-valued function $\bz = (z_{1},z_{2},z_{3}): \mathbb{R}^2 \to \mathbb{C}^3$ by
\begin{equation}
\bz = \partial_{v_{1}} \bX(v_{1},v_{2}) \times \partial_{v_{2}} \bX(v_{1},v_{2}) \, 
\end{equation}
and denote the Jacobian of the map $\bX$ by $J(\bv)$ given by
\begin{equation}
\label{eq:jac}
J(\bv) = \sqrt{z_{1}^2 (\bv) + z_{2}^2 (\bv) + z_{3}^2 (\bv)} \, ,
\end{equation}
In a slight abuse of notation, we set $f(\bv)  = f(\bX(\bv))$ and set $g(\bv) = f(\bv) J(\bv)$. 

Using these definitions, we obtain
\begin{equation}
\label{eq:int-g-main}
\int_{\pa \Omega_{C}} \frac{f(\by)}{r(\bx-\by)^{2s}} \, dS = \int_{\bR^2} \frac{f(\bv) }{r(\bX(\bv') - \bX(\bv))^{2s}} J(\bv) \, d \bv \,  = \int_{\bR^2} \frac{g(\bv)}{r(\bX(\bv') - \bX(\bv))^{2s}} \, d\bv,
\end{equation}

Without loss of generality, we take $\bv' = \bzero$, and $\bX(\bzero) = \bzero$. Discretizing the integral in~\cref{eq:int-g-main} using the punctured trapezoidal rule with spacing $h$ we obtain
\begin{equation}
\label{eq:int_withr}
I_{r} = \int_{\bR^2} \frac{g(\bv)}{r(\bX(\bv))^{2s}} \, d\bv \approx T_{r,h} = {\sum_{j \in \bZ^2}}' \frac{g(\bj h)}{r(\bX(\bj h))^{2s}}  h^2  \, .
\end{equation}
Similar to the error in discretization of~\cref{eqn:int} using the punctured trapezoidal rule, the error $|I_{r} - T_{r,h}|$ is also $O(h^{2-2s})$ owing to the singularity in the denominator. In the following theorem, applying the results of Theorem \ref{thm:diag}, we obtain a diagonal correction which improves the convergence rate to $O(h^{4-2s})$. Given the analytic extension of the Epstein zeta function to complex symmetric first fundamental forms, the proof is a straight-forward adaptation of the one in~\cite{wu2021corrected, wu2023unified}.

\begin{theorem}
\label{thm:diag2}
Let $\pa \Omega_{C}$ be a smooth parametric surface in $\mathbb{C}^{3}$ parameterized by the chart $\bX: \bR^2 \to S$, with $\bX(\bzero) = \bzero$. 
Then, 
\begin{equation}
|I_{r} - T_{r,h} - Z_{A}(s) g(\bzero) h^{2-2s}| = O(h^{4-2s}) \, ,
\label{eq:lap_diag_corr2}
\end{equation}
where $A$ is the first fundamental form of $\bX$ at $\bzero$.
\end{theorem}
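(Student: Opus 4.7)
The plan is to reduce \Cref{thm:diag2} to \Cref{thm:diag} by splitting off the part of the singular kernel $r(\bX(\bv))^{-2s}$ that depends only on the first fundamental form $A$, and then showing that the remaining deviation contributes only $O(h^{4-2s})$ to the quadrature error. Because $\bX(\bzero)=\bzero$ and $A_{ij}=\pa_{v_i}\bX(\bzero)\cdot\pa_{v_j}\bX(\bzero)$, a Taylor expansion of $\bX$ about $\bzero$ yields
\[
r(\bX(\bv))^{2} = Q_{A}(\bv) + R(\bv), \qquad R(\bv) = O(|\bv|^{3}),
\]
with $R$ smooth in a neighborhood of $\bzero$. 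Positive definiteness of $\Re(A)$ gives $|Q_{A}(\bv)|\gtrsim |\bv|^{2}$, so $R/Q_A = O(|\bv|)$ and the binomial series for $(1+R/Q_A)^{-s}$ converges uniformly on a sufficiently small ball about $\bzero$.

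First I would introduce a radially symmetric cutoff $\eta$ with $\eta\equiv 1$ near $\bzero$, exactly as in the proof of \Cref{thm:diag}, and split the integrand as $g/r^{2s} = g\eta/r^{2s} + g(1-\eta)/r^{2s}$. The far-field part has a $C^{\infty}$ compactly supported integrand, so its punctured trapezoidal approximation contributes an error of $O(h^{M})$ for every $M$. For the near-field part I would decompose further as
\[
\frac{g(\bv)\eta(\bv)}{r(\bX(\bv))^{2s}} = \frac{g(\bv)\eta(\bv)}{Q_A(\bv)^{s}} + g(\bv)\eta(\bv)\left(\frac{1}{r(\bX(\bv))^{2s}}-\frac{1}{Q_A(\bv)^{s}}\right).
\]
Applying \Cref{thm:diag} directly to the first summand (treating $g\eta$ as the smooth compactly supported density, with $(g\eta)(\bzero)=g(\bzero)$) immediately yields the diagonal correction $Z_{A}(s)g(\bzero)h^{2-2s}$ modulo an $O(h^{4-2s})$ remainder.

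The technical core is controlling the second summand. Writing $R=R_3+R_4+\cdots$ in homogeneous pieces and expanding the binomial series, the leading contribution to $r^{-2s}-Q_A^{-s}$ is $-sR_3(\bv)Q_A(\bv)^{-s-1}$, which is homogeneous of degree $1-2s$. The decisive observation is that $R_3$ is an odd function of $\bv$ while $Q_A^{-s-1}$ and $\eta$ are even, so $g(\bzero)\eta(\bv)(-sR_3 Q_A^{-s-1})$ is odd; both its integral over $\bR^{2}$ and its punctured trapezoidal sum over $\{\bj h : \bj \in \bZ^{2}\setminus\{\bzero\}\}$ vanish exactly by pairing $\bv\leftrightarrow-\bv$ and $\bj\leftrightarrow-\bj$. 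After removing this odd leading piece, every remaining contribution to $g\eta(r^{-2s}-Q_A^{-s})$ is $O(|\bv|^{2-2s})$ near $\bzero$: the factor $(g(\bv)-g(\bzero))\eta(\bv)$ carries an extra $O(|\bv|)$ against the $O(|\bv|^{1-2s})$ leading singularity, and the subleading binomial terms involving $R_4 Q_A^{-s-1}$, $R_3^{2}Q_A^{-s-2}$, $R_5 Q_A^{-s-1}$, and so on, are all $O(|\bv|^{2-2s})$ or smaller. This is precisely the regularity level treated in the $I_{s,1}$ step of the proof of \Cref{thm:diag}, so the same estimate yields a trapezoidal quadrature error of $O(h^{4-2s})$. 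Combining all three contributions then gives \cref{eq:lap_diag_corr2}.

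The main obstacle is that $r^{-2s}-Q_A^{-s}$ carries a singularity of size only $|\bv|^{1-2s}$, one order worse than the $|\bv|^{2-2s}$ level at which a direct estimate would give $O(h^{4-2s})$ quadrature error; the saving structural observation is that the homogeneous degree-$(1-2s)$ leading part is odd in $\bv$ because $R_3$ is a cubic polynomial, forcing exact cancellation in both the integral and the punctured trapezoidal sum. Once this cancellation is exploited, the reduction to \Cref{thm:diag} is routine.
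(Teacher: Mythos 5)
Your proposal is correct and follows essentially the same route as the paper: define $\delta(\bv)=r(\bX(\bv))^2-Q_A(\bv)=O(|\bv|^3)$, split off the $Q_A^{-s}$ part and apply \Cref{thm:diag} to it, expand the remainder binomially so that the leading degree-$(1-2s)$ homogeneous piece is $-s$ times an odd cubic over $Q_A^{s+1}$ (the paper phrases this via the cubic Taylor polynomial of $g\delta$ rather than of $\delta$ alone, but it is the same cancellation), and observe that both the integral and the punctured trapezoidal sum of this odd piece vanish while the $O(|\bv|^{2-2s})$ remainder yields the $O(h^{4-2s})$ quadrature error. No substantive differences to report.
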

\begin{proof}
Let $\delta(\bv) = r(\bX(\bv))^2-Q_{A}(\bv)$. We have $Q_{A}(\bv) = O(|\bv|^2)$ and $\delta(\bv) = O(|\bv|^3)$, so $\frac{\delta(\bv)}{Q(\bv)} = O(|\bv|)$. Expanding $r(\bX(\bv))^{-2s}$ with respect to $\frac{\delta(\bv)}{Q(\bv)}$ gives
\begin{equation}
\begin{aligned}
\frac{1}{r(\bX(\bv))^{2s}} &= \frac{1}{Q(\bv)^{s}}\left(1-s\frac{\delta(\bv)}{Q(\bv)}+\frac{s(s+1)}{2}\left(\frac{\delta(\bv)}{Q(\bv)}\right)^2+O(|\bv|^3)\right)\\
&= \frac{1}{Q(\bv)^s} - s\frac{\delta(\bv)}{Q(\bv)^{s+1}}+\frac{s(s+1)}{2}\frac{\delta(\bv)^2}{Q(\bv)^{s+2}} + O(|\bv|^{3-2s}) \, ,
\end{aligned}
\label{eq:binomial}
\end{equation}
provided $|\bv|$ is sufficiently small.

As in the proof of Theorem \ref{thm:diag}, let $\eta(\bv)$ be a radially symmetric, compactly supported $C^{\infty}$ function on $\bR^2$ with support in an interval $(-a,a)^2$, and $\eta(\bv) \equiv 1$ on $[-a/2,a/2]^2$. 
There exists $c_{\ell}$, $\ell=0,1,2,3$, such that
\begin{equation}
\label{eq:tayl2}
\begin{aligned}
g(\bv) \delta (\bv) &= \sum_{\ell=0}^{3} c_{\ell} v_{1}^{\ell} v_{2}^{3-\ell} + O(|\bv|^{4}) \, , \\
\end{aligned}
\end{equation}
holds for $\bv$ in the vicinity of $\bzero$. Note that it is always possible to do this since $\delta(\bv) = O(|\bv|^3)$ in the vicinity of the origin.
Again as  in the proof of Theorem \ref{thm:diag}, we first split the integrand into two parts
\begin{equation}
I_{r} = \int_{\bR^2} \frac{g(\bv) \eta(\bv)}{r(\bX(\bv))^{2s}} {\rm d}\bv + \int_{\bR^2} \frac{g(\bv)(1- \eta(\bv))}{r(\bX(\bv))^{2s}} {\rm d}\bv \, 
\end{equation}
and observe that
\begin{equation}
\label{eq:non-sing}
\left|\frac{g(\bv)(1- \eta(\bv))}{r(\bX(\bv))^{2s}} - {\sum_{\bj \in \bZ^2}}' \frac{g(\bj h)(1 - \eta(\bj h))}{r(\bX(\bv))^{2s}} \right| = O(h^{-M}) \, ,
\end{equation}
for all $M>0$. 

We now turn our attention to the singular term in the vicinity of the origin
\begin{equation}
\int_{\bR^2} \frac{g(\bv) \eta(\bv)}{r(\bX(\bv))^{2s}}{\rm d}\bv = \underbrace{\int_{\bR^2} g(\bv) \eta (\bv)\left(\frac{1}{r(\bX(\bv))^{2s}}-\frac{1}{Q_{A}(\bv)^{s}}\right){\rm d}\bv}_{I_1}+ \underbrace{\int_{\bR^2} \frac{g(\bv) \eta(\bv)}{Q_{A}(\bv)^{s}}{\rm d}\bv}_{I_2} \, .
\end{equation}
From Theorem \ref{thm:diag} it is clear that 
\begin{equation}
I_2 =  {\sum_{\bj \in \bZ^2}}' \frac{g(\bj h) \eta (\bj h))}{Q_{A}(\bj h)^{s}}h^2 - Z_{A}(s) g(\bzero) h^{2-2s} + O(h^{4-2s}).
\label{eq:I2}
\end{equation}
and hence it suffices to show that
\begin{equation}
I_1= {\sum_{\bj \in \bZ^2}}' g(\bj h) \eta(\bj h)\left(\frac{1}{r(\bX(\bj h))^{2s}}-\frac{1}{Q_{A}(\bj h)^{s}}\right)h^2+ O(h^{4-2s}).
\label{eq:I1}
\end{equation}

To show~\cref{eq:I1} holds, first note that~\cref{eq:binomial,eq:tayl2} imply that the integrand of $I_1$ for small $\bv$ is given by
\begin{equation}
\begin{aligned}
g(\bv)\eta(\bv)\left(\frac{1}{r(\bX(\bv))^{2s}}-\frac{1}{Q_{A}(\bv)^{s}}\right) &= - s\frac{g(\bv)\eta(\bv)\delta(\bv)}{Q_{A}(\bv)^{s+1}} + O(|\bv|^{2-2s})\\
&= -s\eta(\bv) \sum_{\ell=0}^{3} c_{\ell}\frac{v_{1}^{\ell} v_{2}^{3-\ell}}{Q_{A}(\bv)^{s+1}} + O(|\bv|^{2-2s}) \, .
\end{aligned}
\end{equation}
The leading term is an odd function, and hence both its integral and trapezoidal rule approximation vanish. The $O(|\bv|^{2-2s})$ remainder implies that the error of the trapezoidal rule when applied to $I_1$ is $O(h^{4-2s})$.

Thus, the theorem follows from the combination of~\cref{eq:non-sing,eq:I1,eq:I2}.
\end{proof}

We now return to our discussion of the layer potentials arising in the solution of the Helmholtz equation. For simplicity, we illustrate the computation of $\cS[\sigma](\bx)$ using the diagonally corrected quadrature rule discussed above. Analogous results hold for the other layer potentials. In light of the previous theorem, we find that 
\begin{equation}
\begin{aligned}
\cS[\sigma](\bX(\bl h)) = \sum_{\substack{\bj \in J_{N} \\ \bj \neq \bl}} &\frac{e^{ ik r(\bX(\bl h) - \bX(\bj h))}}{4 \pi r(\bX(\bl h) - \bX(\bj h))} J(\bj h) \sigma(\bX(\bj h))h^2 \\ 
&+\frac{\sigma(\bX(\bl h)) J(\bl h) h}{4\pi} \left(-Z_{A_{\bl}}(1/2) +ikh\right)+O(h^3)\, ,
\end{aligned}
\end{equation}
where $J(\bv)$ is the Jacobian of the parametrization $\bX$ defined in~\cref{eq:jac}, $J_{N}$ is the tensor product index set defined in~\cref{eq:tens-set}, and $N$ is chosen large enough so that $|\sigma(\bj h)| <\varepsilon$, for all $\bj \in \bZ^{2} \setminus J_{N}$, with $\varepsilon$ being a user defined tolerance. The last ``$ikh$'' term is a correction for the leading regular component that is omitted by the punctured trapezoidal rule.
\begin{remark}
The finite truncation of the index set to $J_{N}$ for numerical implementation results in an additional $O(\varepsilon)$ in the discretization of the layer potential. When using higher order corrections, we ignore the contribution of the quadrature corrections for points $\bj$ near the boundary of the index set $J_{N}$ which are outside of the set $J_{N}$. The additional error incurred in this setting is also $O(\varepsilon)$.
\end{remark}

\section{Numerical results}\label{sec:numerics}

In this section, we illustrate the efficacy of the corrected trapezoidal rule, particularly as applied to the evaluation of layer potentials on complexified surfaces, as described in Section \ref{sec:lay_pot}. For the imaginary part, we use a smooth mollifier defined in terms of the function $\phi$ given by
\begin{equation}
\phi(x)=-\frac12\int_x^\infty\mathrm{erfc}(t)\,{\rm d} t\equiv \frac{1}{2}\left(x\,\mathrm{erfc}(x) - \tfrac{e^{-x^2}}{\sqrt{\pi}}\right)
\end{equation}

\subsection{Numerical illustrations}
\paragraph{Example 1: Evaluation of layer potentials on complexified surfaces} We showcase the convergence of the modified trapezoidal rule on two surfaces. The first surface $\pa \Omega^{(1)}$ is a Gaussian bump parameterized by 
\begin{equation}
\pa \Omega^{(1)}=\{(x_{1},x_{2},x_{3})~|~x_{3}(x_{1},x_{2}) = -6\exp(-0.05(x_{1}^2+x_{2}^2)), -\infty\leq x_{1}, x_{2}\leq \infty\}
\end{equation} 
 Let $\bX^{(1)}$ denote the parametrization of the complexified version of $\pa \Omega^{(1)}$ given by
 \begin{equation}
 \begin{aligned}
 X^{(1)}_{1}(v_{1}, v_{2}) &= v_{1} + i \psi(v_{1}) \\
 X^{(1)}_{2}(v_{1}, v_{2}) &= v_{2} + i \psi(v_{2}) \\
 X^{(1)}_{3}(v_{1}, v_{2}) &= x_{3}(X^{(1)}(v_{1}), X^{(2)}(v_{2})) \, ,
 \end{aligned}
 \end{equation}
 where
 \begin{equation}
 \label{eq:psidef}
 \psi(v) = \phi\left(0.75(v+10)\right)-\phi\left(-0.75(v-10)\right) \, .
 \end{equation}
 For evaluating the layer potentials, the surface is truncated in the square $\bv \in [-30,30]^2$.
 The second surface $\pa \Omega^{(2)}$ is a slanted cylinder with radius $R=2.5$ and axis direction $(a_1,a_2,a_3)=(0.5,0.5,1)$. 
 Let $\bX^{(2)}$ denote the parametrization of $\pa \Omega^{(2)}$, given by
\begin{equation}
\begin{aligned}
X^{(2)}_{1}(v_{1},v_{2})&=R\cos v_{1}+a_1(v_{2} + i\psi(v_{2})),\\
X^{(2)}_{2}(v_{1},v_{2})&=R\sin v_{1} +a_2(v_{2} + i\psi(v_{2})),\\
X^{(2)}_{3}(v_{1},v_{2})&= a_3(v_{2} + i\psi(v_{2})).
\end{aligned}
\end{equation}
where $0\leq v_{1} < 2\pi$, $-\infty <v_{2}<\infty$, and $\psi$ is as defined in~\cref{eq:psidef}. As in $\pa \Omega^{(1)}$, the variable $v_{2}$ is truncated to $[-30,30]$.

We evaluate both the Helmholtz single and double layer potentials using the corrected trapezoidal rule with $3^\text{rd},$ $5^\text{th}$, and $7^\text{th}$ order corrections based on the implementation in \cite{wu2023unified}. We test the convergence on the complexified Gaussian bump $\pa \Omega^{(1)}$ at the target point with $(v_{1},v_{2}) = (-7.5, -9.375)$, and on the complexified cylinder $\pa \Omega^{(2)}$ at the target point with $(v_{1},v_{2}) = (1.2\pi, -8.6372)$, see~\cref{fig:conv_one_point}. The corresponding density $\sigma$ is chosen to be
$$
\begin{aligned}
\sigma(v_{1},v_{2})&=(\sin(0.6v_{1}+2) - 3\cos(0.7v_{2}-\pi))+i\exp(\sin(v_{1}+1)-\cos(0.2v_{2})) & &\text{on $\pa \Omega^{(1)}$}\,,\\
\sigma(v_{1},v_{2})&=(\sin(3v_{1}+2) - 3\cos(0.7v_{2}-\pi))+0.3i\,v_{1} \cos(2v_{2}) & &\text{on $\pa \Omega^{(2)}$} \,.
\end{aligned}
$$
The first fundamental forms at the target points are $E = 1 + 6 \times 10^{-3}i, F=1.39 \times 10^{-5}, G = 0.9638 + 0.3805i$ on $\pa \Omega^{(1)}$, and $E = 6.25, F = -0.2765 - 0.0461i, G = 1.4582 + 0.5006i$ on $\pa \Omega^{(2)}$, both of which require evaluation of the Epstein zeta function at a complex quadratic form.
We observe the expected orders of convergence for both layer potentials and on both surfaces. 

\begin{figure}[h!]
\centering
\includegraphics[width=\textwidth]{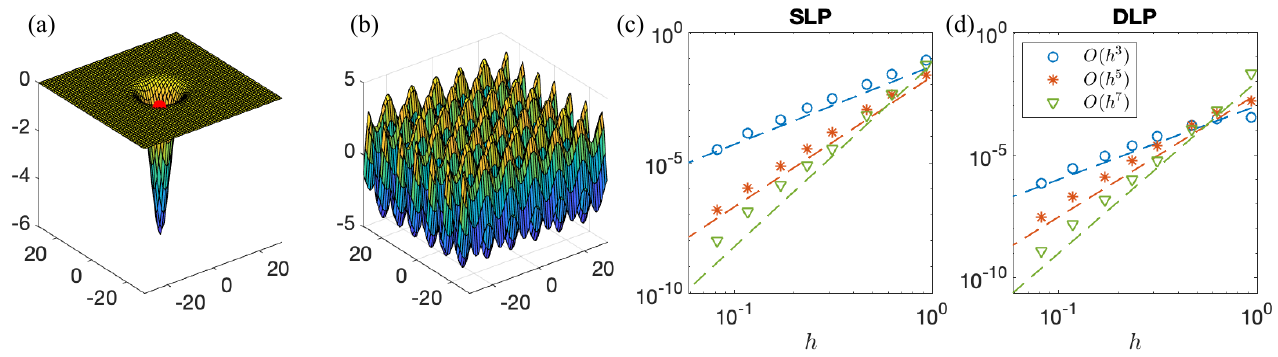}\\
\includegraphics[width=\textwidth]{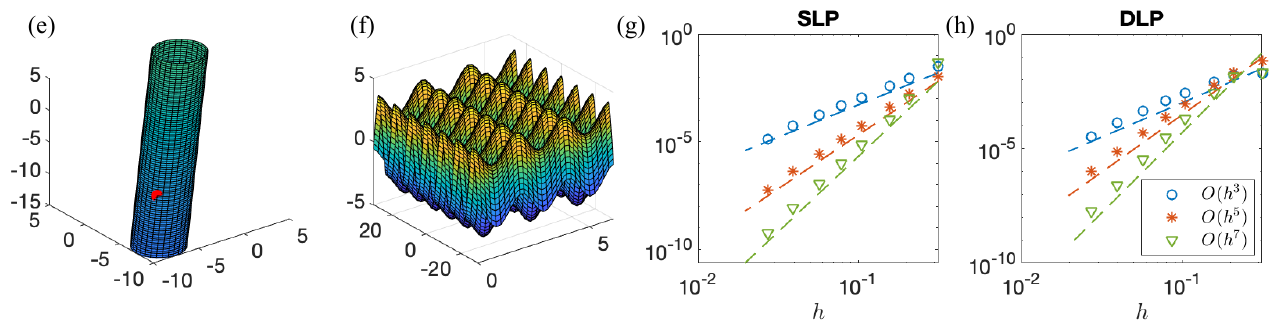}
\caption{Evaluation of Helmholtz single and doulbe layer potentials on complexified surfaces using corrected trapezoidal rules. \textbf{Top row} plots (a) real part of a Gaussian bump and target point (red dot), (b) real part of density $\sigma(v_{1},v_{2})$, and $O(h^3)$--$O(h^7)$ convergence of (c) single-layer and (d) double-layer potentials at the target. The dashed lines give reference to the corresponding exact orders of convergence. \textbf{Bottom row} (e)--(h) is analogous to the top row, but for a cylindrical surface.}
\label{fig:conv_one_point}
\end{figure}

\paragraph{Example: Dirichlet Helmholtz problem}
We conclude with an example, highlighting the use of the above quadrature method for solving Dirichlet Helmholtz problems in half-spaces. Specifically, using the complexified boundary integral equation described in~\cref{sec:lay_pot}, we solve the Dirichlet problem in the region lying above the surface in~\cref{fig:solve} (upper left), using the complexification of the $x_{1}$ and $x_{2}$ variables shown in the upper right panel of~\cref{fig:solve}.

More concretely, the real part of the surface is parameterized by
$$\bX^{(r)}(\bv) = \left(v_{1},v_{2},e^{-(v_1^2+v_2^2)/8}\left[\cos(1.9 v_1+0.95 v_2)+\sin(v_1+1.55 v_2) \right] \right)$$
and the imaginary part is parameterized by
$$\bX^{(i)}(\bv) = \frac{1}{2}\left(\psi(v_1),\psi(v_2),0 \right) \, ,$$
and $\psi$ is given by~\cref{eq:psidef}.
The complexified surface is then parameterized by
$$\bX(\bv) = \bX^{(r)}(\bv) + i \bX^{(i)}(\bv),$$
which we discretize on the range $-20 \le v_1,v_2 \le 20.$ 

In order to have an analytic solution for comparison, the data on the half-space was chosen to be the field generated by a point source at $\boldsymbol{s} = (1,1,-2)$ (below the surface); in which case, the solution above the plane is simply the fundamental solution due to a point source at $\boldsymbol{s}.$

The integral operator in~\cref{eq:comb-rep} was discretized using $5^{\rm th}$ order corrections with $h = 1/16$ and $640 \times 640$ points. Due to the relatively large size of the problem, in order to accelerate the computation, a minor modification of~\cite{minden2017,sushnikova2023} was used to form a compressed representation of the inverse of the discretization of the operator in~\cref{eq:comb-rep}. Plots of the resulting solution to the boundary integral equation, and the error in the corresponding solution to the Helmholtz Dirichlet problem, are shown in~\cref{fig:solve}. We point out that in the bottom right panel of \cref{fig:solve}, the exponential growth near the edges is expected --- accuracy in the solution is only guaranteed in or near the region $\{\bx\in\mathbb{R}^3 ~|~ |x_1|<10,\,|x_2|<10\},$ i.e. where the imaginary part of the complexified surface is close to zero.

\begin{figure}[h!]
\centering
\includegraphics[width=0.45\textwidth]{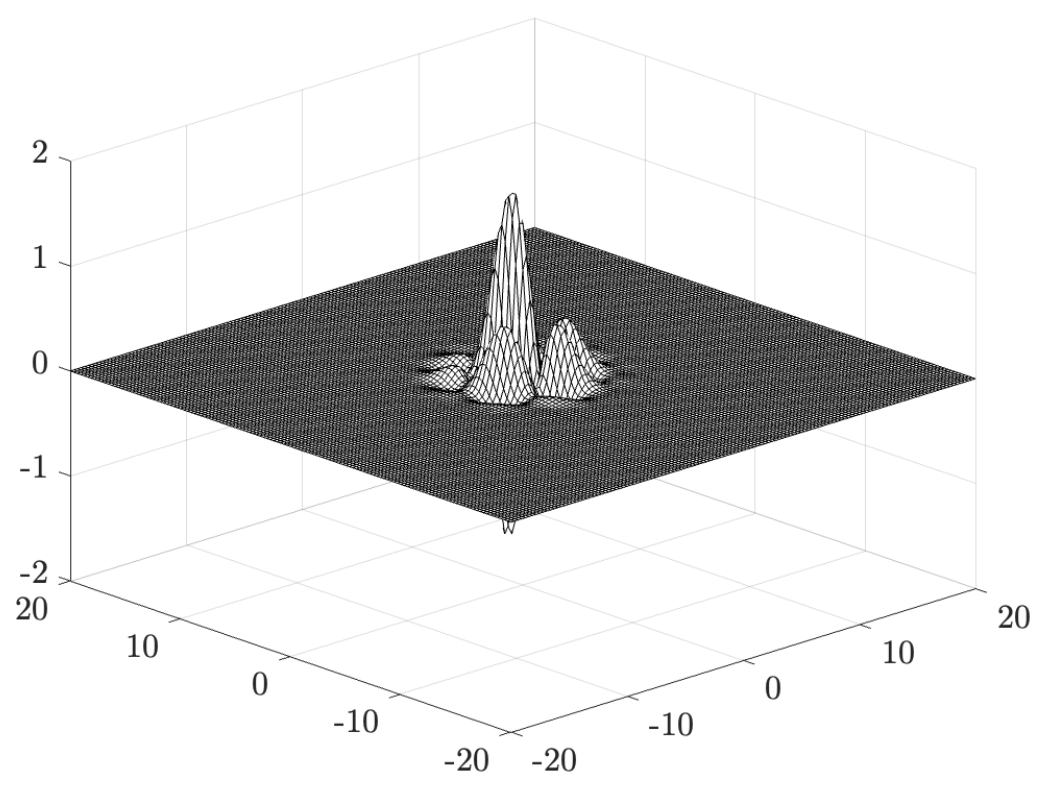}
\includegraphics[width=0.45\textwidth]{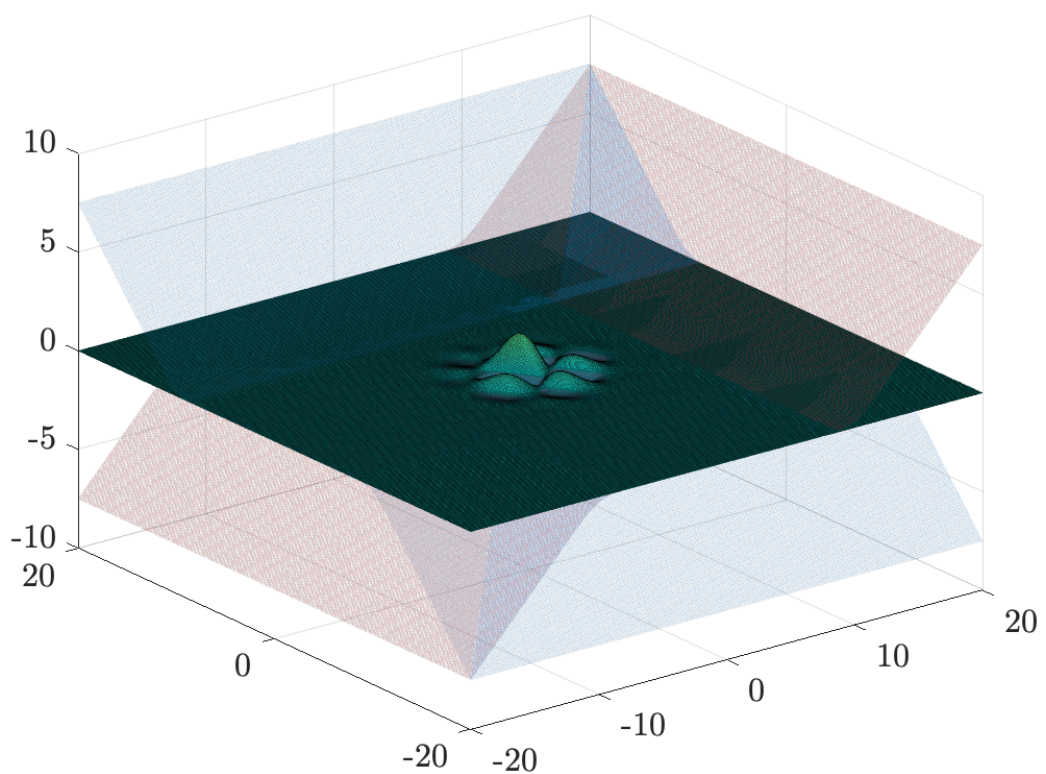}
\includegraphics[width=0.45\textwidth]{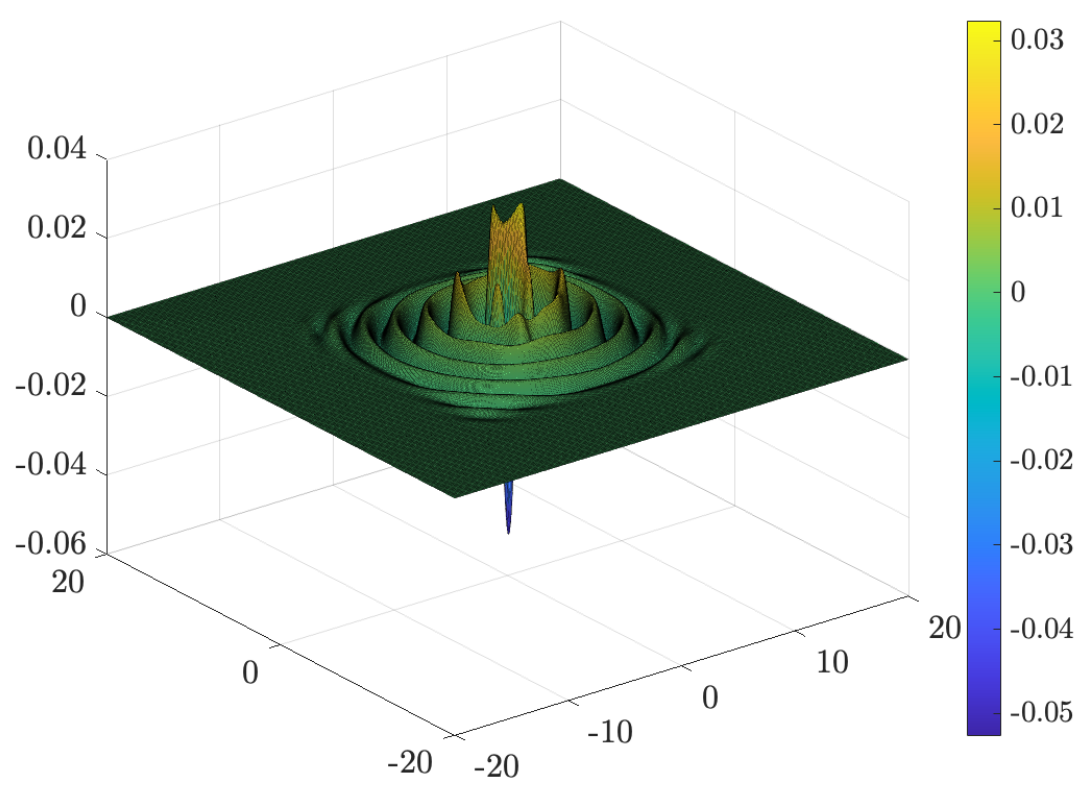}
\includegraphics[width=0.45\textwidth]{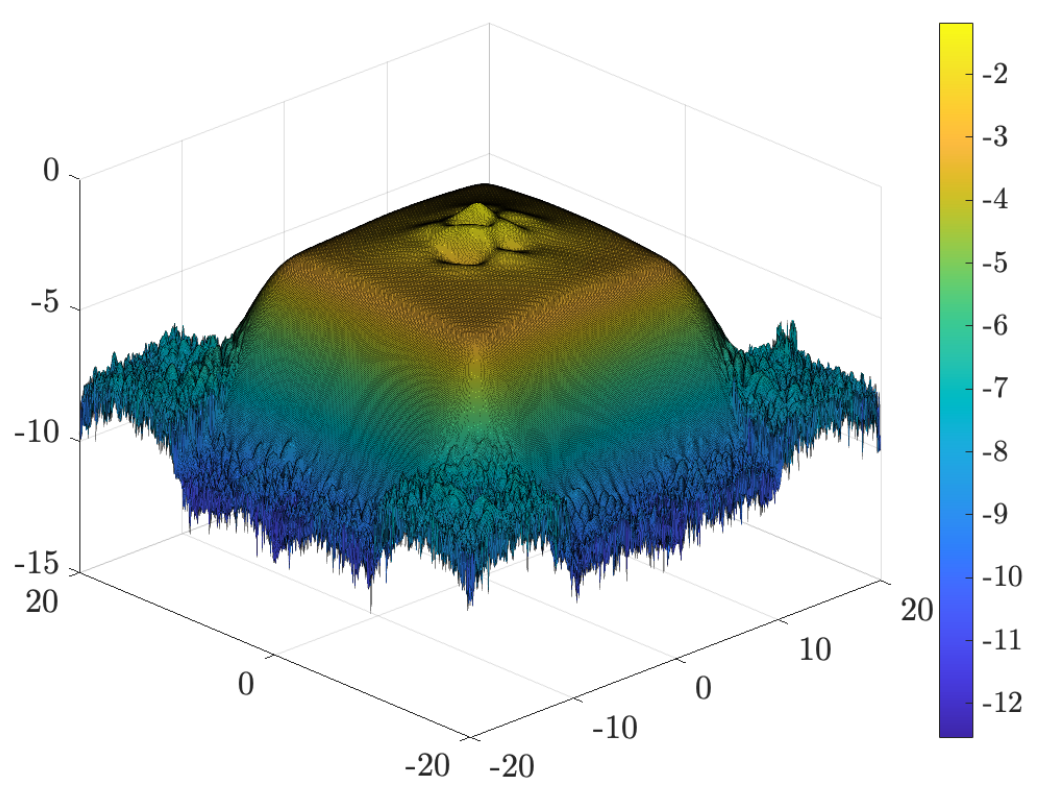}\\
\includegraphics[width=0.45\textwidth]{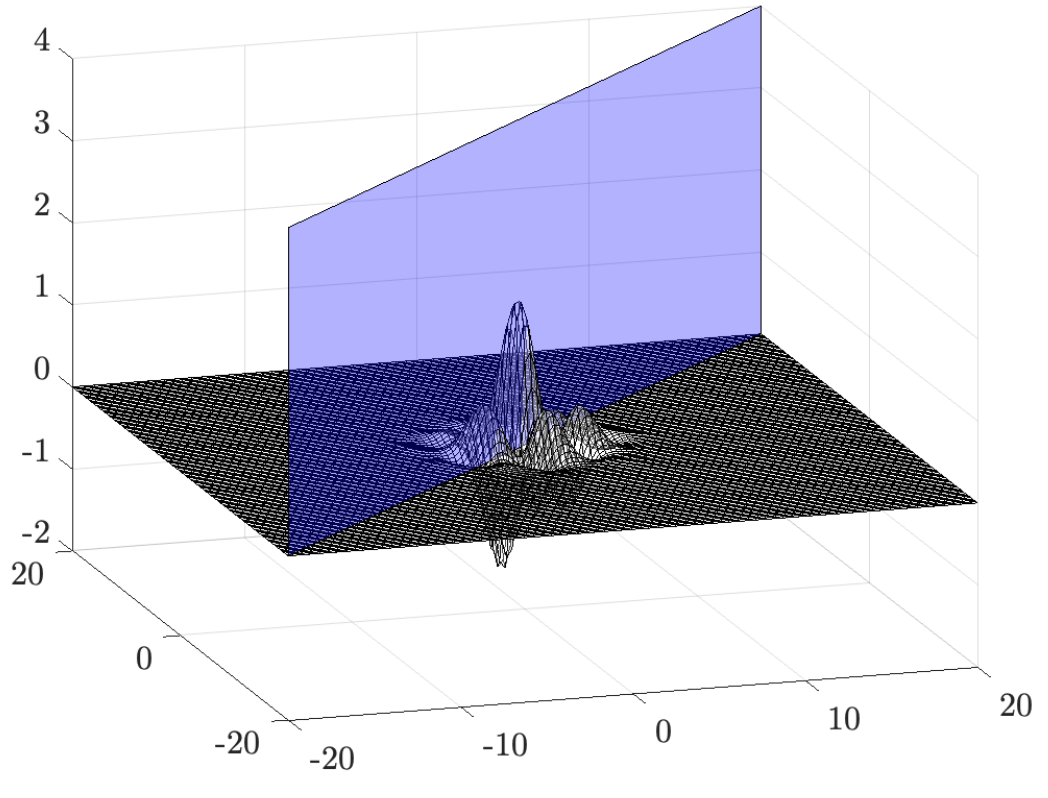}
\includegraphics[width=0.45\textwidth]{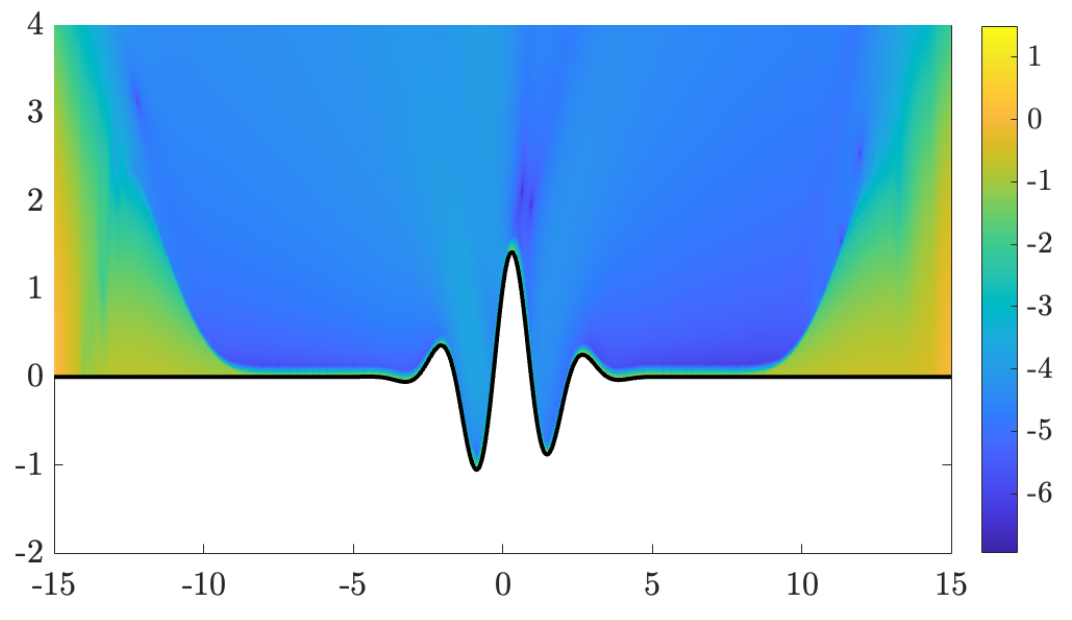}
\caption{Solution of a Dirichlet Helmholtz problem with wavenumber $k=3$ in a half-space. The solution was obtained using a combined field boundary integral formulation on a complexified surface. The data was generated by a point source at $(1,1,-2)$ (below the surface) and the solution computed in the region above the surface. $5^{\rm th}$ order corrections were used. {\bf Top left:} real part of the surface. {\bf Top right:} red,blue  - $z$ coordinate is the imaginary part of the $x$ and $y$ components, respectively. {\bf Middle left:} real part of the density (the solution of the boundary integral equation). {\bf Middle right:} $\log_{10}$ plot of the absolute value of the density. {\bf Bottom left:} a plot of the cross section taken. {\bf Bottom right:} error plot ($\log_{10}$ relative error) on a cross-section.}
\label{fig:solve}
\end{figure}


\section{Conclusion}

We have described an extension of the classical corrected trapezoidal rule to a class of weakly singular and singular integrals on complex domains in $\mathbb{C}^2.$ The method is based on properties of the Epstein zeta function and Wigner limits. The proofs, and the resulting method, rely on the analysis of a novel extension of the Epstein zeta function to complex symmetric quadratic forms. The performance of the resulting quadrature was demonstrated on several examples for which $3$rd, $5$th, and $7$th order quadrature corrections were constructed. Moreover, we illustrated the application of our quadrature method to solving scattering problems involving infinite geometries, which are reduced to numerically finite problems when complexified.

We briefly mention several possible directions for future research. Firstly, the methods described here should extend quite naturally to higher dimensions, where they could be useful for solving certain volume integral equations in infinite domains. Secondly, though the infinite portion of many geometries can be discretized easily via a standard equispaced mesh, for many examples triangular meshes are more convenient. This is particularly true for surfaces with regions of high curvature. A natural direction is then to formulate fast and accurate quadrature rules for triangular patches on complex surfaces. Alternatively, an attractive option would be to use triangular patches for the parts of the surface which are purely real, and knit this together with tensor-product trapezoidal discretizations of the complex portions.

\section{Acknowledgements}

The authors would like to thank Charles Epstein, Leslie Greengard, Shidong Jiang, and Per-Gunnar Martinsson for many useful discussions.

\bibliographystyle{plain}
\bibliography{bib}

\appendix

\section{Proof of~\Cref{prop:zeta_ext}}\label{app:split}
The proof is essentially identical to the one presented in~\cite{crandall1998}, wherein the case of real positive definite matrices $A$ was considered. We include it here for completeness. We begin by recalling the following identity
$$\frac{1}{Q_A(\bv)^s} = \frac{\pi^s}{\Gamma(s)}\int_0^\infty t^{s-1} e^{-\pi Q_A(\bv) t}\,{\rm d} t,$$
which holds provided that $\Re{(Q_A(\bv))}>0$ and $\Re{(s)} >0.$ Additionally, we require the following identity which is a simple consequence of the Poisson summation formula \cite{poisson1821}. 

\begin{lemma}
$$ \sum_{\bj \in \bZ^2} e^{-\pi t Q_A(\bj)} = \frac{1}{t}\frac{1}{\sqrt{{\rm det}\,A}}  \sum_{\bj \in \bZ^2}  e^{-\pi Q_{A^{-1}}(\bj)/t}$$
for $\Re{(Q_A(\bv))}\geq 0$, with equality only holding for $\bv = \bzero$,  and $t>0.$
\end{lemma}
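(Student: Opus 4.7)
The plan is to first establish the identity for real symmetric positive-definite $A$ via the classical Poisson summation formula, and then extend it to complex symmetric $A$ with $\Re(A)$ positive definite by analytic continuation in the three matrix entries $E,F,G$.

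For the real case, I would diagonalize $A = O^T D O$ with $O$ orthogonal and $D = \mathrm{diag}(\lambda_1,\lambda_2)$, $\lambda_i > 0$. A change of variables combined with the standard one-dimensional Gaussian transform $\int_\mathbb{R} e^{-\pi t\lambda x^2-2\pi i kx}\,dx = (t\lambda)^{-1/2} e^{-\pi k^2/(t\lambda)}$ produces the Fourier transform
\begin{equation*}
\int_{\mathbb{R}^2} e^{-\pi t Q_A(\bv)} e^{-2\pi i \bk \cdot \bv}\,d\bv \;=\; \frac{1}{t\sqrt{\det A}}\,e^{-\pi Q_{A^{-1}}(\bk)/t}.
\end{equation*}
Classical Poisson summation $\sum_\bj f(\bj) = \sum_\bk \hat f(\bk)$ applied with $f(\bv) = e^{-\pi t Q_A(\bv)}$ then yields the claimed identity in the real positive-definite case.

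Next I would promote this to complex $A$ by analytic continuation. View each side as a function of $(E,F,G) \in \mathbb{C}^3$ on the open connected domain
\begin{equation*}
\mathcal{U} := \{(E,F,G) \in \mathbb{C}^3 : \Re(A) \text{ is positive definite}\},
\end{equation*}
whose intersection with $\mathbb{R}^3$ is the cone of real positive-definite matrices. For the left-hand side, each summand is entire in $(E,F,G)$, and on any compact $K \subset \mathcal{U}$ there is a uniform bound $\Re(Q_A(\bj)) \ge c_K |\bj|^2$, delivering uniform convergence on $K$ and hence joint analyticity on $\mathcal{U}$. For the right-hand side, I first observe that $\Re(A) \succ 0$ forces $\det A \ne 0$ (any real null vector $v$ would give $v^T \Re(A) v = 0$), so $A^{-1}$ is analytic on $\mathcal{U}$. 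Using $A^T = A$, one computes $\Re(A^{-1}) = A^{-1} \Re(A) \bar A^{-1}$; for real $v \ne 0$, setting $p := A^{-1}v$ one obtains $v^T \Re(A^{-1}) v = p^* \Re(A)\, p > 0$, so $\Re(A^{-1})$ is a real positive-definite matrix throughout $\mathcal{U}$. This secures convergence of the right-hand sum, uniformly on compacta, and thus its analyticity.

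Finally, the two analytic functions on $\mathcal{U}$ agree on the real slice $\mathcal{U}\cap\mathbb{R}^3$, which is a nonempty open subset of $\mathbb{R}^3$. Fixing any two of the variables to real values and applying the one-variable identity theorem in the remaining complex variable, then iterating over the three coordinates, forces the functions to coincide on all of $\mathcal{U}$. The main technical obstacle is the verification that $\Re(A^{-1})$ remains positive definite everywhere on $\mathcal{U}$ — this is crucial both for convergence of the right-hand sum and for uniform convergence on compacta; once that algebraic step is in hand, the remainder is a routine combination of Poisson summation and analytic continuation.
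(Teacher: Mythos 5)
Your proof is correct, but it takes a genuinely different route from the paper's. The paper proves the transformation formula directly for complex $A$: it periodizes, $F(\bx)=\sum_{\bj}e^{-Q_A(\bx+\bj)}$, expands $F$ in a Fourier series, evaluates the coefficients as Gaussian integrals $\int_{\bR^2}e^{-2\pi i\bk\cdot\by}e^{-Q_A(\by)}\,{\rm d}\by=\pi(\det A)^{-1/2}e^{-\pi^2Q_{A^{-1}}(\bk)}$, and sets $\bx=\bzero$ (this is just the standard proof of Poisson summation specialized to Gaussians, applied with a complex quadratic form). You instead prove the identity classically for real positive-definite $A$ and then continue analytically in the entries $(E,F,G)$ over the convex domain $\mathcal{U}=\{\Re(A)\succ 0\}$. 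Your approach has two real merits: it cleanly isolates where complex-analyticity enters, and it explicitly \emph{proves} that $\Re(A)\succ 0$ forces $\Re(A^{-1})\succ 0$ via $\Re(A^{-1})=A^{-1}\Re(A)\bar A^{-1}$ and $v^T\Re(A^{-1})v=p^*\Re(A)p$ with $p=A^{-1}v$ --- a fact the paper assumes as a separate hypothesis elsewhere and which is exactly what is needed for the right-hand sum to converge. The paper's route is shorter but silently uses the complex Gaussian Fourier transform formula, which itself requires a contour-shift or continuation argument of the very kind you make explicit. One point you should pin down: for complex $A$ the quantity $\det A$ need not have positive real part (e.g.\ $A=(1+10i)I$), so $\sqrt{\det A}$ must be specified as the analytic branch on the simply connected domain $\mathcal{U}$ that agrees with the positive root on the real slice; without fixing this branch, the right-hand side of the identity --- and hence the function you are continuing --- is not well defined. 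With that one-line addition, your argument is complete; the iterated one-variable identity theorem works as you describe because each coordinate slice of $\mathcal{U}$ is convex and meets the set where agreement is already established in a set with an accumulation point.
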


\begin{proof}
    We begin by observing that the function $F$ defined by
$$F(\bx) = \sum_{\bj \in \bZ^{2}} e^{-Q_{A}(\bx + \bj)}$$
is doubly periodic with period 1. In particular,
\begin{equation}
\label{eq:fdef}
F(\bx) = \sum_{\bk\in \bZ^2} e^{2\pi i \bk \cdot \bx} \hat{F}_{\bk},
\end{equation}
where
$$\hat{F}_{\bk} = \int_{[0,1]^2} e^{-2\pi i \bk \cdot \by} F(\by)\,{\rm d}\by.$$
Note, that the Fourier coefficients can be further simplified as follows
\begin{equation}
\begin{aligned}
\hat{F}_{\bk} &= \int_{[0,1]^2} e^{-2\pi i \bk \cdot \by} \sum_{\bj \in \bZ^2} e^{-Q_{A}(\by + \bj)}\,{\rm d}\by \, ,\\
&= \int_{\bR^2} e^{-2\pi i \bk \cdot \by} e^{-Q_{A}(\by)}\,{\rm d}\by \, \\
&= \frac{\pi}{\sqrt{{\rm det}\,A}} e^{-\pi^2 Q_{A^{-1}}(\bk)} \, .
\end{aligned}
\end{equation}
The result then follows from evaluating~\cref{eq:fdef} at $\bx = 0$, and a simple rescaling of the matrix $A$.
\end{proof}
We return back to the proof of~\Cref{prop:zeta_ext}.
After summing the first identity over $\bj \in \bZ^{2} \setminus \{\bzero\}$, one finds that for $\Re{(s)} >1,$
$$ \sum_{\bj \in \bZ^2}\,' \frac{1}{Q_A(\bj)^s} = \frac{\pi^s}{\Gamma(s)} \sum_{\bj \in \bZ^2}\,'\int_0^\infty t^{s-1} e^{-\pi Q_A(\bj) t}\,{\rm d}t. $$
We split the domain of integration into two parts: $[0,1)$ and $[1,\infty),$ which we denote by $I_1$ and $I_2$ respectively. In the first integral, $I_1,$ after changing variables by setting $t=1/r$ we obtain
\begin{align*}
\frac{\Gamma(s)}{\pi^s}I_1&=\int_0^1  t^{s-1} \sum_{\bj \in \bZ^2}\,' e^{-\pi Q_A(\bj)t}\,{\rm d}t\\
&=\int_0^1  t^{s-1} \sum_{\bj \in \bZ^2} e^{-\pi Q_A(\bj)t}\,{\rm d}t - \int_0^1 t^{s-1} \,{\rm d}t\\
&=\int_0^1  t^{s-2} \frac{1}{\sqrt{{\rm det} A}} \sum_{\bj\in \bZ^2} e^{-\pi Q_A^{-1}(\bj)/t}\,{\rm d}t -\frac{1}{s }  \\
&=\int_0^1  t^{s-2} \frac{1}{\sqrt{{\rm det}A}} \sum_{\bj \in \bZ^2}\,'\, e^{-\pi Q_{A^{-1}}(\bj)/t}\,{\rm d}t+\frac{1}{\sqrt{{\rm det}A}}\int_0^1 t^{s-2}\,{\rm d}t  -\frac{1}{s }  \\
&=\int_1^\infty  r^{(1-s)-1} \frac{1}{\sqrt{{\rm det}A}} \sum_{\bj \in \bZ^2}\,'\, e^{-\pi  r Q_{A^{-1}}(\bj)}\,{\rm d}r+\frac{1}{(s-1)\sqrt{{\rm det}A}} -\frac{1}{s }  \\
&= \frac{1}{\sqrt{{\rm det}A}}\sum_{\bj \in \bZ^2}\,'\,\frac{\Gamma(1-s,\pi Q_{A^{-1}}(\bj))}{\left(\pi Q_{A^{-1}}(\bj)\right)^{1-s}}+\frac{1}{(s-1)\sqrt{{\rm det}A}} -\frac{1}{s}.
\end{align*}
Similarly,
\begin{align*}
\frac{\Gamma(s)}{\pi^s}I_2&=\int_1^\infty t^{s-1} \sum_{\bj \in \bZ^2}\,' e^{-\pi Q_A(\bj)t}\,{\rm d}t= \sum_{\bj \in \bZ^2}\,'\,\frac{\Gamma(s,\pi Q_A(\bj))}{\left(\pi Q_A(\bj)\right)^s}.
\end{align*}
Upon putting these last two expressions together, we arrive at
\begin{align*}
\sum_{\bj \in \bZ^2}\,' \frac{1}{Q_A(\bj)^s} = &\frac{\pi^s}{\Gamma(s)}\Bigg\{\sum_{\bj}\,'\,\left[\frac{\Gamma(s,\pi Q_A(\bj))}{\left(\pi Q_A(\bj)\right)^s}+\frac{1}{\sqrt{{\rm det}A}}\frac{\Gamma(1-s,\pi Q_{A^{-1}}(\bj))}{\left(\pi Q_{A^{-1}}(\bj)\right)^{1-s}}\right]\\
&+\frac{1}{(s-1)\sqrt{{\rm det}A}} -\frac{1}{s}\Bigg\}.
\end{align*}
This identity holds for $\Re{(s)}>1$, and the expression on the right-hand side is analytic for all $s\in\mathbb{C}\backslash\{1\}$ (note that $\frac{1}{\Gamma(s)}$ is an entire function, which is analytic at $s=0$ in particular). Therefore the lattice sum is analytically continued by the above expression,
which completes the proof.

\section{Proof of~\Cref{prop:borwein}}\label{app:bor}
This appendix extends the Theorem 1 in~\cite{borwein89} to the complex symmetric quadratic form $Q_{A}$. The proof involves a minor modification of that in~\cite{borwein89}, which we include here for completeness. We first comment on the definition of $Q_{A}(\bv)^{-s}$. Since, we have assumed that $\mathrm{Re}(A)>0$, the numerical range of the quadratic form $Q_{A}(\bv)$ is contained in the right half of the complex plane. Hence, 
$Q_{A}(\bv)^{-s} = e^{-s\Log{(Q_{A}(\bv))}}$ can be defined via the principal branch of the logarithm with a branch cut on the negative real axis, i.e. $\Log{(z)} = \log{|z|} + i \textrm{Arg}(z)$, with $\textrm{Arg}(z) \in (-\pi, \pi]$.
\begin{theorem}
\label{thm:wigner}
Suppose that $W_{A}^{(N)}$, $Z_{A}^{(N)}$, and $I_{A}^{(N)}$ are as defined in~\cref{eq:wzi-def}. Then,
for $\Re{(s)} \in (0,1)$ , $W_{A}(s) = \lim_{N\to\infty} W_{A}^{(N)}(s)$ exists and coincides with the analytic extension of 
$Z_{A}(s) = \lim_{N\to \infty} Z_{A}^{(N)}(s)$ on that region.
\end{theorem}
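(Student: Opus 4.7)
The plan is to adapt the unit-cell comparison argument of~\cite{borwein89} to the complex symmetric setting. First, I would regroup the difference $W_A^{(N)}(s) = Z_A^{(N)}(s) - I_A^{(N)}(s)$ as a sum over shifted unit cells:
\begin{equation*}
W_A^{(N)}(s) = -\int_{[-\frac{1}{2},\frac{1}{2}]^2} Q_A(\bv)^{-s}\,d\bv + \sum_{\bj \in J_N \setminus \{\bzero\}}\!\left[Q_A(\bj)^{-s} - \int_{\bj+[-\frac{1}{2},\frac{1}{2}]^2} Q_A(\bv)^{-s}\,d\bv\right].
\end{equation*}
The isolated cell at the origin is integrable precisely when $\Re(s) < 1$, which handles the only singularity of $Q_A^{-s}$ on $\bR^2$.

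Next, I would estimate each lattice summand by Taylor-expanding $f(\bv) := Q_A(\bv)^{-s}$ about $\bv = \bj$. Because the cell $\bj + [-\frac{1}{2},\frac{1}{2}]^2$ is centro-symmetric about $\bj$, the linear contribution integrates to zero, so only a quadratic remainder survives. A direct computation gives $|\partial^\alpha f(\bv)| = O(|\bv|^{-2\Re(s)-|\alpha|})$ for $|\alpha|\le 2$, provided one has a lower bound $|Q_A(\bv)| \gtrsim |\bv|^2$. Positive-definiteness of $\Re(A)$ yields $\Re(Q_A(\bv)) \ge c|\bv|^2$ on $\bR^2$, hence $|Q_A(\bv)| \ge c|\bv|^2$. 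Consequently each summand is $O(|\bj|^{-2\Re(s)-2})$, which is absolutely summable whenever $\Re(s) > 0$. Dominated convergence then gives $W_A^{(N)}(s) \to W_A(s)$ locally uniformly on the strip $0 < \Re(s) < 1$, and $W_A$ is holomorphic there.

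Finally, to identify $W_A(s)$ with the analytic continuation of $Z_A(s)$ from Proposition~\ref{prop:zeta_ext}, I would use a holomorphy-in-$A$ argument. Fix $s$ in the strip. Both $W_A(s)$ (by the dominated-convergence bound above applied to a one-parameter family of matrices) and the analytically continued $Z_A(s)$ (by the explicit formula~\cref{eqn:zeta_exp}) depend holomorphically on the entries of $A$ throughout the connected open set where $\Re(A)$ and $\Re(A^{-1})$ are positive definite. For real positive-definite $A$, the identity $W_A(s) = Z_A(s)$ in the strip is the classical Wigner-limit result of~\cite{borwein89}. Since the real positive-definite cone is a totally real submanifold of the admissible complex region of maximum dimension, the identity principle in several complex variables extends the equality to all admissible complex symmetric $A$.

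The main technical obstacle is controlling the Taylor remainder uniformly in $\bj$ and locally uniformly in $A$: since the complexified form $Q_A$ can vanish on complex rays, the simple positivity argument used in~\cite{borwein89} does not survive verbatim. The key observation is that the integration path from $\bj$ to $\bj + \bu$ stays in $\bR^2$, where $|Q_A(\bv)| \gtrsim |\bv|^2$ holds with a constant depending continuously on $A$ through $\Re(A)$; combined with analogous bounds on the second derivatives of $Q_A^{-s}$, this yields the required uniform remainder estimate.
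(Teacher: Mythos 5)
Your first half---regrouping $W_A^{(N)}$ into unit cells, killing the linear Taylor term by the centro-symmetry of each cell about its lattice point, and bounding the quadratic remainder by $O(|\bj|^{-2\Re(s)-2})$ using $|Q_A(\bv)|\ge \Re(Q_A(\bv))\ge c|\bv|^2$---is essentially the paper's argument (the paper telescopes over the shells $|\bj|_\infty=N$ rather than summing cells directly, a cosmetic difference). Where you genuinely diverge is the identification of $W_A$ with the continuation of $Z_A$. The paper keeps $A$ fixed and adapts the second half of \cite{borwein89}: it writes $Q_A(\bx)^{-s}$ in divergence form, applies Green's theorem on the level set $|Q_A(\bx)|=1$ to produce a pair of integrals $I_1$ (convergent for $\Re(s)<1$) and $I_2$ (convergent for $\Re(s)>1$) equal to $\pm$ the same boundary integral, hence meromorphically continued with $I_1=-I_2$, and then compares $W_A+I_1$ with $Z_A+I_1$ across the line $\Re(s)=1$. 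You instead fix $s$ in the strip and continue analytically in $A$ from the real positive-definite case, via the identity principle on a maximal totally real submanifold of the parameter domain in $\mathbb{C}^3$. This is a legitimate alternative: both $W_A(s)$ (by your locally uniform cell estimates) and the continued $Z_A(s)$ (by the explicit formula \cref{eqn:zeta_exp}) are holomorphic in $(E,F,G)$, and they agree on the real cone by \cite{borwein89}. The one point you should make explicit is connectivity of the admissible parameter set, since the identity principle only propagates equality within a connected component; this does hold, because $\{\Re(A)>0\}$ is convex and, for complex symmetric $A$, the condition $\Re(A^{-1})>0$ is automatic from the congruence $\Re(A^{-1})=A^{-1}\,\Re(A)\,A^{-*}$. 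The trade-off is that your route uses the real-variable theorem of \cite{borwein89} as a black box together with a standard but nontrivial fact from several complex variables, whereas the paper's route is self-contained (re-proving the real case along the way) and works for one $A$ at a time; your argument, on the other hand, makes transparent that the complex statement is forced by the real one by analyticity in the quadratic form.
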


\begin{proof}
We proceed similar to the proof in~\cite{borwein89} for the first part, which shows the existence of $\lim_{N \to \infty} (W_{A}^{(N)} - W_{A}^{(0)})$ via showing the absolute convergence of $\sum_{j=1}^{N-1} \delta_{j}(s) = \sum_{j=1}^{N} W_{A}^{(j)}(s) - W_{A}^{(j-1)}(s) = W_{A}^{(N)}(s) - W_{A}^{(0)}(s)$ as $N\to \infty$, when $s \in \Omega =\{s\, : \Re{(s)}>\varepsilon, \textrm{ and } |s|<R \}$. Observe that
\begin{equation}
\delta_{N}(s) = \sum_{\substack{\bj \in \bZ^2 \\|\bj|_{\infty} = N}} \int_{[-\half,\half]^2} (Q_{A}(\bj)^{-s} - Q(\bj + \bx)^{-s}) \, {\rm d}\bx \, .
\end{equation}
Letting $f(x,y) = Q_{A}(\bj + \bx)^{-s}$ with $|\bj|_{\infty} = N$, and $\bx \in [-1/2,1/2]^2$, we will first show that
\begin{equation}
\label{eq:f-tayl-est}
|f(\bx) - f(\bzero) - x_{1} \partial_{x_{1}}f (\bzero) - x_{2} \partial_{x_{2}} f (\bzero)|  \leq M N^{-\varepsilon-2} \, ,
\end{equation}
where $M$ is a constant independent of $s \in \Omega$ and $N$. 
This follows from a Taylor expansion of $f$, and noting that
\begin{equation}
\partial_{x_{1}x_{1}}f(\bx) = s \left( s + 1 \right) Q_{A}(\bj + \bx)^{-s-2} \left( 2E(j_{1}+x_{1}) + 2F(j_{2}+x_{2}) \right)^2 - 4EsQ(\bj + \bx)^{-s-1} \, .
\end{equation}
Since $\Re{(A)} > 0$, we observe that for $s\in \Omega$, $\ell>0$, 
\begin{equation}
\label{eq:qest}
\begin{aligned}
\sup_{\substack{|\bj|_{\infty} = N\\ \bx\in [-1/2,1/2]^2} } |Q_{A}(\bj + \bx)|^{-s - \ell} &\leq |\Re(Q_{A}(\bj + \bx))|^{-\Rez{s} - \ell} \leq C_{A} N^{-2\varepsilon - 2\ell} \, .
\end{aligned}
\end{equation}
Combining the estimate with the definition of $\partial_{x_{1}x_{1}}f$, we conclude that there exists an $M_{1}$ independent of $s\in \Omega$ such that
\begin{equation}
|\partial_{x_{1}x_{1}}f| \leq M_{1} N^{-2\varepsilon -2} \, .
\end{equation}
Similar results hold for $\partial_{x_{1}x_{2}}f$, and $\partial_{x_{2}x_{2}}f$. Combining these estimates on the second derivatives, we conclude that~\cref{eq:f-tayl-est} holds.

Note that~\cref{eq:qest}, also shows that $Z_{A}(s) := \lim_{N\to \infty} Z_{A}^{(N)}(s)$ exists as long as $\Re{(s)}>1$, and is an analytic function. This follows from the absolute summability of $Q_{A}(\bj)^{-s}$ over $\bZ^2$ as long as $\Re{(s)}>1+\varepsilon$, for any $\varepsilon>0$.

Returning back to the definition $\delta_{N}$, note that
\begin{equation}
\begin{aligned}
|\delta_N (s)| &= \left| \sum_{\substack{\bj \in \bZ^2 \\ |\bj|_{\infty} = N}} \int_{[-\half,\half]^2}  (Q_{A}(\bj)^{-s} - Q_{A}(\bj + \bx)^{-s}) \, {\rm d}\bx   \right|  \\
&= \left| \sum_{\substack{\bj \in \bZ^2 \\ |\bj|_{\infty} = N}} \int_{[-\half,\half]^2} (f(\bzero)-f(\bx)) \, {\rm d}\bx  \right|
 \\
&= \left| \sum_{\substack{\bj \in \bZ^2 \\ |\bj|_{\infty} = N}} \int_{[-\half,\half]^2} (f(\bzero) + x_{1} \partial_{x_{1}} f(\bzero) +x_{2} \partial_{x_{2}} f(\bzero)-f(\bx)) \, {\rm d}\bx  \right| \, \\
& \leq \sum_{\substack{\bj \in \bZ^2 \\ |\bj|_{\infty} = N}} M N^{-2\varepsilon - 2} \leq 8 M N^{-2\varepsilon - 1} \, . 
\end{aligned}
\end{equation}
where the third equality follows from the fact that \
\begin{equation*} 
\int_{[-\half,\half]^2} x_{1} \,{\rm d}\bx = \int_{[-\half,\half]^2} x_{2} \,{\rm d}\bx = 0 \, .
\end{equation*}

Since $\delta_{N}(s)$ are entire functions of $s$, it follows from the Weierstrass $M$ test that 
$\delta(s) := \sum_{j=1}^{\infty} \delta_{j}(s)$ is analytic for $s\in \Omega$, and hence in the half plane $\Re{(s)}>0$. 
This also implies that $W_{A}(s) := \lim_{N\to \infty} W_{A}^{(N)}(s) = \delta(s) + W_{A}^{(0)}(s)$ exists and is an analytic function for $\Re{(s)} \in (0,1)$. The restriction on the domain of analyticity arises from the fact that $W_{A}^{(0)}(s) = -I_{A}^{(0)}(s)$, which is well defined and analytic as along as $\Re{(s)} \in (0,2)$, owing to the singularity in the integrand near the origin.

The next part of the proof in~\cite{borwein89}, relies on identifying two functions which are equal to the negative of each other, and where one of them is analytic and has a closed form representation around $s=0$, and the other function
is analytic and has a closed form representation around $s=\infty$. In our case, in order to construct such a function, we first observe that
\begin{equation}
\label{eq:qform-divform}
Q_{A}(\bx)^{-s} = \frac{1}{2(1-s)}\left( \partial_{x_{1}}(x_{1} Q_{A}(\bx)^{-s})  + \partial_{x_{2}}(x_{2} Q_{A}(\bx)^{-s}) \right) \, ,
\end{equation}
for any $s\in \mathbb{C} \setminus \{ 1 \}$. Let $\Omega\in \mathbb{R}^{2}$ denote the set such that $|Q_{A}(\bx)| >1$ for
$\bx \in \mathbb{R}^{2} \setminus \Omega$. Note that $\Omega$ is compact, contained in some disk centered at the origin, and contains the origin. Constructing such a set is always possible since $\Re{(A)}>0$. Finally, let
\begin{equation}
I_{1}(s) = \int_{\Omega} Q_{A}(\bx)^{-s} \, {\rm d}\bx \, ,\quad \textrm{and} \quad 
I_{2}(s) = \int_{\mathbb{R}^{2}\setminus \Omega} Q_{A}(\bx)^{-s} \, \, {\rm d}\bx \, .
\end{equation}
Owing to the singularity of $Q_{A}(\bx)^{-s}$ as a function of $\bx$ near $\bzero \in \Omega$, we note that the integral
$I_{1}$ converges for $\Re{(s)}<1$. And owing to the behavior of $Q_{A}(\bx)^{-s}$ at $\infty$, the integral $I_{2}$ converges for $\Re{(s)}>1$. However, using~\cref{eq:qform-divform}, and Green's theorem, we observe that
\begin{equation}
\begin{aligned}
I_{1}(s) &= \frac{1}{2(1-s)}\int_{\pa \Omega} Q_{A}(\bx)^{-s} (x_{1}\,{\rm d}x_{2} - x_{2}\,{\rm d}x_{1}) \, , \quad \textrm{and} \,, \\
I_{2}(s) &= -\frac{1}{2(1-s)}\int_{\pa \Omega} Q_{A}(\bx)^{-s} (x_{1}\,{\rm d}x_{2} - x_{2}\,{\rm d}x_{1}) \, .
\end{aligned}
\end{equation}
These integral forms for $I_{1}(s)$, and $I_{2}(s)$ are valid for all $s\neq 1$ in the right half plane, since $|Q_{A}(\bx)| = 1$ on $\pa \Omega$, and since $\Omega$ is contained in a ball with $|\pa \Omega| < \infty$. 
From this, we conclude that $I_{1}(s)$ and $I_{2}(s)$ extend analytically to the the whole $\mathbb{C}$ plane (except for $s=1$) where $I_{1} = -I_{2}$.

Recall that $W_{A}(s) + I_{1}(s) = \delta(s) - I_{A}^{(0)}(s) + I_{1}(s)$ is analytic for $\Re{(s)}>0$.
On the other hand, 
\begin{equation}
\lim_{N\to\infty} (I_{A}^{(N)}(s) - I_{1}(s)) = \lim_{N\to \infty} \int_{[-N-1/2, N+1/2]^2 \cap (\mathbb{R}^{2} \setminus \Omega)} Q_{A}(\bx)^{-s} \, {\rm d}\bx = I_{2}(s) \, ,
\end{equation}
which holds for $\Re(s)>1$ (and extends analytically for all $s\neq 1$). Thus for $\Re(s)>1$,
\begin{equation}
\begin{aligned}
W_{A}(s) +  I_{1}(s) &= \lim_{N\to \infty} (Z_A^{(N)}(s) - I_A^{(N)}(s)) + I_1(s)\\
&= \lim_{N\to \infty} Z_A^{(N)}(s) - (I_A^{(N)}(s) - I_1(s))\\
&= \lim_{N\to \infty} Z_{A}^{(N)}(s) - I_2(s)  \\
&= Z_{A}(s) + I_1(s) \, .
\end{aligned}
\end{equation}
This implies that $W_{A}(s)$ is an analytic extension of $Z_A(s)$ to the region $\Re{(s)}\in (0,1)$.
\end{proof}
\end{document}